\documentclass{amsart}
\usepackage{amsmath}
\usepackage{amscd}
\usepackage{amssymb}
\usepackage{amsthm}
\usepackage{color}
\usepackage{tcolorbox}
\usepackage{fullpage}
\usepackage{longtable}
\usepackage{pdflscape}
\usepackage{afterpage}
\usepackage[numbers]{natbib}  
\usepackage[draft]{hyperref}

\theoremstyle{plain}
\newtheorem{prop}{Proposition}
\newtheorem{lemma}[prop]{Lemma}
\newtheorem{theorem}[prop]{Theorem}

\theoremstyle{definition}

\newtheorem{remark}[prop]{Remark}

\newenvironment{psmallmatrix}
  {\left(\begin{smallmatrix}}
  {\end{smallmatrix}\right)}
  
\newcommand{\cE}{\mathcal E}

\author{Haowu Wang}

\address{Max-Planck-Institut f\"{u}r Mathematik, Vivatsgasse 7, 53111 Bonn, Germany}

\email{haowu.wangmath@gmail.com}
\author{Brandon Williams}

\address{Lehrstuhl A für Mathematik, RWTH Aachen, 52056 Aachen, Germany}

\email{brandon.williams@matha.rwth-aachen.de}

\subjclass[2020]{11F55}

\date{\today}
\begin{document}

\nocite{*}

\title{Graded rings of Hermitian modular forms with singularities}

\begin{abstract} We study graded rings of meromorphic Hermitian modular forms of degree two whose poles are supported on an arrangement of Heegner divisors. For the group $\mathrm{SU}_{2,2}(\mathcal{O}_K)$ where $K$ is the imaginary-quadratic number field of discriminant $-d$, $d \in \{4, 7,8,11,15,19,20,24\}$ we obtain a polynomial algebra without relations. In particular the Looijenga compactifications of the arrangement complements are weighted projective spaces.
\end{abstract}

\maketitle

\section{Introduction}

The ring of symmetric Hermitian modular forms of degree two over the number field $\mathbb{Q}(\sqrt{-3})$ was shown by Dern and Krieg \cite{DK1} to be a polynomial algebra without relations generated by forms of weights $4, 6, 9, 10, 12$. Their proof relies on the construction of modular forms with special divisors as Borcherds products, and has been applied to imaginary quadratic fields of other discriminants (\cite{DK2, bw2}). However, by \cite{W}, the algebra of symmetric Hermitian modular forms over the unitary group $\mathrm{U}_{2,2}(\mathcal{O}_K)$ or $\mathrm{SU}_{2,2}(\mathcal{O}_K)$ is freely generated if and only if the discriminant of the underlying number field is $-3$ or $-4$, and as the discriminant increases the ring structure rapidly becomes quite complicated.

In this paper we will instead consider rings $\mathcal{M}_*^!$ of meromorphic modular forms with poles supported on certain rational quadratic divisors. Looijenga \cite{L} found conditions that guarantee that every nonzero form in $\mathcal{M}_*^!$ has nonnegative weight and that $\mathcal{M}_*^!$ itself is finitely generated. The proj of $\mathcal{M}_*^!$ is then the Looijenga compactification of the complement of these rational quadratic divisors in the Hermitian modular fourfold, with properties similar to the Baily--Borel compactification. 

Among rings of the form $\mathcal{M}_*^!$ there are a surprising number of examples of free algebras of modular forms:

\begin{theorem} For each $d \in \{4,7,8,11,15,19,20,24\}$ there is a Heegner divisor $H_d$ for which the ring of symmetric meromorphic modular forms for the group $\mathrm{SU}_{2,2}(\mathcal{O}_K)$, $K = \mathbb{Q}(\sqrt{-d})$ is freely generated. In particular, the Looijenga compactification of the arrangement complement $\Gamma_K \backslash (\mathbf{H}_2 - H_d)$ is a complex weighted projective space of dimension four. 
\end{theorem}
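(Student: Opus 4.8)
The plan is to transfer everything to the orthogonal setting, write down explicit generators as Borcherds products and additive lifts, and finish with an algebraic independence check together with a Hilbert series comparison. First I would use the exceptional isomorphism identifying $\mathbf{H}_2$ with the type IV domain attached to an even lattice $L_d$ of signature $(2,4)$ built from $K$, under which symmetric Hermitian modular forms of weight $k$ for $\Gamma_K$ become orthogonal modular forms of weight $k$ (possibly with a character) for a finite-index subgroup $\Gamma_{L_d}$ of $\mathrm{O}^+(L_d)$; the Heegner divisor $H_d$ becomes a finite union of rational quadratic divisors. The discriminant $-d$ and the class of $H_d$ must be chosen so that two conditions hold simultaneously: (a) Borcherds' obstruction for $H_d$ vanishes — equivalently, the relevant Heegner divisor is orthogonal, under the Bruinier--Funke pairing, to the finite-dimensional space of vector-valued cusp forms of weight $1+4/2=3$ for the Weil representation of $L_d$ — so that there is a holomorphic Borcherds product $\psi_d$ of some weight $w_d$ with $\mathrm{div}(\psi_d)=H_d$; and (b) Looijenga's positivity hypothesis from \cite{L} is satisfied by the arrangement $H_d$, so that $\mathcal{M}_*^!$ is finitely generated, has no nonzero forms of negative weight, and has $\mathrm{Proj}$ equal to the arrangement compactification. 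Conditions (a) and (b) are finite computations to be run for each of the eight fields, and it is precisely these that should isolate the list $\{4,7,8,11,15,19,20,24\}$.

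Next I would produce the five generators $f_0,\dots,f_4$ of the asserted weights $k_0,\dots,k_4$. Some are holomorphic modular forms realised as additive (Gritsenko) lifts of Jacobi forms of small weight and index or as further Borcherds products; the others are genuinely meromorphic, of the form $g/\psi_d^m$ with $g$ a suitable holomorphic form, lying in $\mathcal{M}_*^!$ because their only poles, of controlled order, lie along $H_d$. The input data and weights would be tabulated for each $d$. Comparing the divisors of the Borcherds products among the $f_i$ one checks that they have no common zero on $\mathbf{H}_2-H_d$, so that $[f_0:\dots:f_4]$ gives a regular map $\Phi\colon \Gamma_K\backslash(\mathbf{H}_2-H_d)\to\mathbb{P}(k_0,\dots,k_4)$.

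It then remains to show the $f_i$ are algebraically independent and generate all of $\mathcal{M}_*^!$. For independence I would compute the $5\times 5$ determinant whose first row is $(k_0 f_0,\dots,k_4 f_4)$ and whose remaining rows are the partial derivatives of the $f_i$ along the four coordinates of $\mathbf{H}_2$; this is itself a meromorphic modular form of predictable weight and divisor, and it suffices that it not vanish identically, which can be read off from a leading Fourier--Jacobi coefficient at a cusp. Algebraic independence gives an inclusion $\mathbb{C}[f_0,\dots,f_4]\hookrightarrow\mathcal{M}_*^!$ of graded rings, and to see it is onto I would match Hilbert series: combining the dimension formula for holomorphic orthogonal modular forms for $L_d$ with Looijenga's identification of $\mathcal{M}_k^!$ with the global sections of $\mathcal{L}^{\otimes k}$ for an ample $\mathbb{Q}$-line bundle $\mathcal{L}$ on the arrangement compactification, I would compute $\sum_{k\ge 0}\dim\mathcal{M}_k^!\,t^k$ and verify that it equals $\prod_{i=0}^4(1-t^{k_i})^{-1}$. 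Equal Hilbert series force $\mathcal{M}_*^!\cong\mathbb{C}[f_0,\dots,f_4]$, hence $\mathrm{Proj}\,\mathcal{M}_*^!=\mathbb{P}(k_0,\dots,k_4)$ and $\Phi$ is an isomorphism.

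The main obstacle is this last dimension count. Unlike the holomorphic case, where Hirzebruch--Mumford proportionality and Riemann--Roch for vector-valued modular forms apply directly, here one must control the contribution of the poles along $H_d$ — in practice through an exact sequence relating $\mathcal{M}_*^!$ to holomorphic forms on a modification along $H_d$, or through Looijenga's cohomological description of the arrangement compactification — and carry the bookkeeping out separately for each of the eight fields, with the reflective geometry of $H_d$ playing a decisive role. A secondary, purely computational hurdle is checking the obstruction vanishing (a) and Looijenga's positivity (b) for all eight discriminants, since these are exactly what constrains $d$.
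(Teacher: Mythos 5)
Your construction of the generators and the algebraic-independence check via the $5\times 5$ Jacobian determinant match the paper's approach, and your reduction to the orthogonal setting is the same. But the final step --- proving that the five algebraically independent forms actually generate all of $\mathcal{M}_*^!$ --- is where your proposal has a genuine gap. You propose to compare the Hilbert series $\sum_k \dim \mathcal{M}_k^!\, t^k$ with $\prod_i (1-t^{k_i})^{-1}$, and you correctly identify this as ``the main obstacle'': there is no available dimension formula for spaces of meromorphic modular forms with poles supported on a Heegner arrangement. Hirzebruch--Mumford proportionality computes dimensions of holomorphic forms, and Looijenga's results give finite generation, vanishing in negative weight, and $\mathcal{M}_0^! = \mathbb{C}$, but nothing resembling a closed formula for $\dim \mathcal{M}_k^!$ in each weight. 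The exact sequences you gesture at would require controlling the images of restriction maps to $H_d$ in every weight, which is itself as hard as the original problem. As written, the surjectivity step does not go through.

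The paper avoids dimension counting entirely via a Jacobian criterion (Theorem \ref{th:j}(ii)): if the Jacobian $J = J(f_0,\dots,f_4)$ is nonzero, has only a \emph{simple} zero on the reflective divisor $D(1)\setminus H$, and has no other zeros or poles outside $H$, then $\mathcal{M}_*^! = \mathbb{C}[f_0,\dots,f_4]$. The proof is purely algebraic: take a hypothetical $f_5$ of minimal weight not in the subring, form the degenerate $6\times 6$ determinant with a repeated first row, expand by cofactors to get $\sum_i (-1)^i k_i f_i J_i = 0$, observe that each cofactor $J_i$ is skew-symmetric and hence vanishes on $D(1)\setminus H$, so $J_i/J$ lies in $\mathcal{M}_*^!$ and by minimality in $\mathbb{C}[f_0,\dots,f_4]$, forcing $f_5$ into the subring --- a contradiction. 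The hypothesis on $\mathrm{div}\, J$ is then verified concretely in each of the eight cases by exhibiting a skew-symmetric Borcherds product ($\Phi_{34}, \Phi_{28}, \Phi_{27}, \Phi_{24}, \Phi_{20}, \Phi_{19}, \Phi_{18}, \Phi_{17}$) whose divisor away from $H_d$ is exactly $D(1)$ with multiplicity one, and noting $J/\Phi \in \mathcal{M}_0^! = \mathbb{C}$. This is the key lemma your proposal is missing; with it, the only inputs needed are Looijenga's vanishing results and explicit Borcherds products, and no Hilbert series comparison is required.
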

Here $\Gamma_K$ is the group generated by $\mathrm{SU}_{2,2}(\mathcal{O}_K)$ and a certain reflection $\sigma$, such that the modular forms on $\Gamma_K$ are precisely the symmetric modular forms on $\mathrm{SU}_{2,2}(\mathcal{O}_K)$.
In Table \ref{Maintab1}, $D_{\ell}$ denotes an irreducible Heegner divisor of discriminant $\ell$. Note that for $d = 4$ we must take the group $\mathrm{SU}_{2,2}(\mathcal{O}_K)$, rather than the usual modular group $\mathrm{U}_{2,2}(\mathcal{O}_K)$.
For discriminant $d \in \{4, 7, 8, 11\}$ the modular forms are allowed to have poles precisely on the Siegel upper half-space $\mathbb{H}_2$ (viewed as the subset of symmetric matrices in the Hermitian upper half-space $\mathbf{H}_2$) and its conjugates under the modular group. We prove in Theorem \ref{th:classify} that these are the only such examples. We do not have a classification of all free algebras of meromorphic Hermitian modular forms, but from some searching it seems likely that there are none besides those mentioned above.
\begin{table}[htbp]
\caption{Free algebras of meromorphic modular forms.}\label{Maintab1}
\renewcommand\arraystretch{1.1}
\noindent\[
\begin{array}{ccc}
d & H_d & \text{Generator weights} \\
\hline
4 & D_{1/4} & 2,4,6,8,10 \\
7 & D_{1/7} & 2,3,4,7,8 \\
8 & D_{1/8} & 2,3,4,6,8 \\
11 & D_{1/11} & 2,3,4,5,6 \\
15 & D_{1/15} + D_{1/15} & 2,3,3,4,4 \\
19 & D_{1/19} + D_{4/19} & 1,2,3,4,5 \\
20 & D_{1/20} + D_{1/20} + D_{1/5} & 1,2,3,3,5 \\
24 & D_{1/24} + D_{1/24} + D_{1/6} & 1,2,3,3,4 \\
\hline
\end{array}\]
\end{table}

Hermitian modular forms of degree two also have the geometric interpretation as forms on moduli spaces of abelian fourfolds with CM, or of lattice-polarized K3 surfaces of Picard number 16, and the special divisors $D_{\ell}$ parameterize varieties with additional automorphisms. The theorem of Dern--Krieg above can be interpreted as a statement on K3 surfaces polarized by the root lattice $U \oplus E_8 \oplus E_6$, and the Jacobian of their generators is precisely the discriminant. As discussed in \cite{L}, some interesting moduli spaces can be realized as the complements of Heegner divisors in orthogonal modular varieties and the GIT compactifications of these moduli spaces are usually isomorphic to the Proj of the graded ring of meromorphic modular forms with constrained poles. It is natural to guess that the graded rings found here also have moduli space interpretations of this sort.

\section{Background}

\subsection{Lattices and modular forms}

Let $L = (L, Q)$ be an even integral lattice of signature $(n, 2)$, $n \ge 1$, where $Q : L \rightarrow \mathbb{Z}$ is its quadratic form and $\langle x, y \rangle = Q(x+y) - Q(x) - Q(y)$ its bilinear form. Fix one of the two connected components $\mathcal{D}(L)$ of $$\{[\mathcal{Z}] \in \mathbb{P}(L\otimes \mathbb{C}): \; \langle \mathcal{Z}, \mathcal{Z} \rangle =0, \; \langle \mathcal{Z}, \overline{\mathcal{Z}} \rangle < 0\}$$ and define $\mathcal{A}(L) = \{\mathcal{Z} \in L \otimes \mathbb{C}: \; [\mathcal{Z}] \in \mathcal{D}(L)\}.$
Let $\mathrm{O}(L)$ be the orthogonal group of $(L, Q)$. The \emph{full modular group} associated to $L$ is $$\mathrm{O}^+(L) = \{\gamma \in \mathrm{O}(L): \; \gamma(\mathcal{D}(L)) = \mathcal{D}(L)\}.$$ For a finite-index subgroup $\Gamma \le \mathrm{O}^+(L)$, a \emph{modular form} of weight $k \in \mathbb{Z}$ and character $\chi : \Gamma \rightarrow \mathbb{C}^{\times}$ is a holomorphic function $f : \mathcal{A}(L) \rightarrow \mathbb{C}$ satisfying $$f(t \mathcal{Z}) = t^{-k} f(\mathcal{Z}), \; \; t \in \mathbb{C}^{\times}$$ and $$f(\gamma \mathcal{Z}) = \chi(\gamma) f(\mathcal{Z}), \; \; \gamma \in \Gamma$$ as well as (for $n \le 2$) a boundedness condition ``at cusps". A typical choice for $\Gamma$ is the discriminant kernel $$\widetilde{\mathrm{O}}^+(L) = \{\gamma \in \mathrm{O}^+(L): \; \gamma x - x \in L \; \text{for all} \; x \in L'\},$$ where $$L' = \{x \in L \otimes \mathbb{Q}: \; \langle x, y \rangle \in \mathbb{Z} \; \text{for all} \; y \in L\}$$ is the dual lattice. A \emph{meromorphic} modular form is a meromorphic function $f$ satisfying the functional equations above as well as a meromorphy condition at cusps (which again is automatic for $n > 2$).

For any vector $\lambda \in L'$ of positive norm, define the \emph{rational quadratic divisor} $$\mathcal{D}_{\lambda}(L) = \{[\mathcal{Z}] \in \mathbb{P}(L \otimes \mathbb{C}): \; \langle \mathcal{Z}, \lambda \rangle = 0\}.$$
Let $m$ be a positive rational number. 
The union $$H(m, \gamma) = \bigcup_{\substack{\lambda \in L + \gamma \\ \lambda \; \text{primitive in} \; L' \\ Q(\lambda) = m}} \mathcal{D}_{\lambda}(L)$$ is locally finite and $\widetilde{\mathrm{O}}^+(L)$-invariant and therefore descends to an analytic divisor on $\tilde Y_L := \widetilde{\mathrm{O}}^+(L) \backslash \mathcal{D}(L)$, called a \emph{Heegner divisor} of discriminant $m$. We additionally define $$H(m) = \bigcup_{\gamma \in L'/L} H(m, \gamma)$$ and $$D(m) = \bigcup_{d\in \mathbb{N}} H(m / d^2),$$ such that $D(m)$ is the union of all $\mathcal{D}_{\lambda}(L)$ with $\lambda \in L'$ (not necessarily primitive) of norm $m$. Both $H(m)$ and $D(m)$ are $\mathrm{O}^+(L)$-invariant.

Modular forms on $\widetilde{\mathrm{O}}^+(L)$ can be constructed by lifting modular forms on congruence subgroups of $\mathrm{SL}_2(\mathbb{Z})$. We follow Borcherds \cite{B} and consider the input forms into this lift as vector-valued modular forms whose multiplier is the Weil representation attached to $L$. When $L$ has even rank this is the representation $$\rho : \mathrm{SL}_2(\mathbb{Z}) \longrightarrow \mathbb{C}[L' / L] = \mathrm{span}(e_x: \; x \in L' / L)$$ defined on $S = \begin{psmallmatrix} 0 & -1 \\ 1 & 0 \end{psmallmatrix}$ and $T = \begin{psmallmatrix} 1 & 1 \\ 0 & 1 \end{psmallmatrix}$ by $$\rho(S) e_x = \frac{i^{\mathrm{sig}(L) / 2}}{\sqrt{|L'/L|}} \sum_{y \in L'/L} e^{2\pi i \langle x,y \rangle} e_y$$ and $$\rho(T) e_x = e^{-2\pi i Q(x)} e_x.$$ (This is the dual of the representation $\rho_L$ of \cite{B} because in our convention $L$ has signature $(n, 2)$.) A \emph{nearly-holomorphic vector-valued modular form} of weight $k$ is a holomorphic function $F : \mathbb{H} \rightarrow \mathbb{C}[L'/L]$ that satisfies $$F(M \cdot \tau) = (c \tau + d)^k \rho(M) F(\tau)$$ and whose Fourier expansion about $\infty$ involves only finitely many negative exponents.

Borcherds \cite{B} defined two singular theta lifts that construct modular forms with respect to $\widetilde{\mathrm{O}}^+(L)$. Let $k \in \mathbb{N}_0$ and let $$F(\tau) = \sum_{x \in L'/L} \sum_{m \in \mathbb{Z} - Q(x)} c(m, x) q^m e_x, \; \; q = e^{2\pi i \tau}$$ be a nearly-holomorphic modular form of weight $1 + k - n/2$ whose Fourier coefficients are integers.

(1) If $k = 0$, there is a \emph{Borcherds product} $\Psi_F$, which is a meromorphic modular form with a character (or multiplier system) of weight $c(0, 0) / 2$ and divisor $$\mathrm{div} \, \Psi_F = \sum_{\substack{\lambda \in L' \\ Q(\lambda) > 0}} c(-Q(\lambda), \lambda) \mathcal{D}_{\lambda}(L).$$
(Note that the sum is not taken over primitive vectors. In particular $\Psi_F$ may be holomorphic even if some coefficients $c(-Q(\lambda), \lambda)$ are negative.)

(2) If $k \ge 1$, Borcherds defines a \emph{singular additive lift} $\Phi_F$, which is a meromorphic modular form of weight $k$ with trivial character on $\widetilde{\mathrm{O}}^+(L)$, all of whose poles have order $k$ and lie on rational quadratic divisors $\mathcal{D}_{\lambda}(L)$ with $c(-Q(\lambda), \lambda) \ne 0$. When $F$ is holomorphic, this coincides with the Gritsenko lift; in particular, $\Phi_F$ is also holomorphic, and if $F$ is a cusp form then $\Phi_F$ is also a cusp form unless $k = 1$.

Nearly-holomorphic input forms $F$ can be computed effectively \cite{bw3}. Most of the Borcherds products we will need were already tabulated in Appendix B of \cite{bw3}.

If $F$ is a modular form on the orthogonal group of a signature $(n, 2)$ lattice $L$ then its restriction, or pullback, to any rational quadratic divisor is a modular form of the same weight. There is an important generalization of the restriction map called the \emph{quasi-pullback}. For any (holomorphic) modular form $F(\mathfrak{Z})$ of weight $k$, with a zero of order $r \in \mathbb{N}_0$ on $\mathcal{D}_{\lambda}(L)$, we write $\mathcal{Z} = \mathfrak{z} + \lambda w$ with $\mathfrak{z} \subseteq \lambda^{\perp}$ and $w \in \mathbb{C}$, and define $$\mathrm{Q}F(\mathfrak{z}) := \lim_{w \rightarrow 0} \frac{F(\mathfrak{z} + \lambda w)}{w^r}.$$ This defines a modular form on $\mathcal{D}_{\lambda}(L)$ of weight $k + r$ which is a cusp form if $r > 0$.

The restriction map preserves the space of (singular) additive lifts. Slightly more precisely, for any form $F$ of weight $1 + k - n/2$, $k \ge 1$ and any $\lambda \in L$ of norm $m > 0$, we have the identity $$\Phi_F \Big|_{\mathcal{D}_{\lambda}(L)} = \Phi_{\vartheta F},$$ where $\vartheta F \in M_{1 + k - n/2 + 1/2}^!(\rho_{\lambda^{\perp}})$ is the \emph{theta-contraction} of $F$, obtained by multiplying the components by unary theta series of the form $\theta_a(\tau) = \sum_{n \in \mathbb{Z}} q^{(n + a)^2}$, $a \in \frac{1}{m}\mathbb{Z}$ and summing up. Ma \cite{M} showed under the assumption of Koecher's principle that the quasi-pullback of Borcherds products satisfies the same formula, $$\mathrm{Q} \Psi_F = \Psi_{\vartheta F},$$ by showing that both sides define a modular form with the same divisor. This identity was proved in a different way by Zemel \cite{Z} who showed that it holds even without the assumption of Koecher's principle.

\subsection{Hermitian modular forms of degree two}

Fix an imaginary-quadratic number field $K$ of discriminant $d_K$, with ring of integers $\mathcal{O}_K$ and dual lattice $$\mathcal{O}_K^{\#} = \{x \in K: \; \mathrm{tr}_{K/\mathbb{Q}}(xy) \in \mathbb{Z} \; \text{for all} \; y \in \mathcal{O}_K\}.$$

Let $\mathbf{H}_2$ be the Hermitian upper half-space of degree two: $$\mathbf{H}_2 = \{z=x + iy: \; x, y \in \mathbb{C}^{2 \times 2}, \; \overline{x} = x^T, \; \overline{y} = y^T, \; y \; \text{positive definite}\}.$$ This is acted upon by the split-unitary group $$\mathrm{U}_{2, 2}(\mathbb{C}) = \{M \in \mathrm{GL}_4(\mathbb{C}): \; M^T J \overline{M} = J\}, \; \; J = \begin{psmallmatrix} 0 & 0 & -1 & 0 \\ 0 & 0 & 0 & -1 \\ 1 & 0 & 0 & 0 \\ 0 & 1 & 0 & 0 \end{psmallmatrix}$$ by M\"obius transformations: $\begin{psmallmatrix} a & b \\ c & d \end{psmallmatrix} \cdot z = (az+b)(cz+d)^{-1}$ where $a,b,c,d$ are $(2 \times 2)$ blocks.

Let $$\Gamma \le \mathrm{U}_{2, 2}(\mathcal{O}_K)$$ be a finite-index subgroup. We denote by $\mathcal{A}_k(\Gamma)$ the space of automorphic forms of weight $k$, meaning meromorphic functions $f : \mathbf{H}_2 \rightarrow \mathbb{C}$ that satisfy $$f(\gamma z) = \mathrm{det}(cz+d)^k f(z), \; \; \gamma = \begin{psmallmatrix} a & b \\ c & d \end{psmallmatrix} \in \Gamma.$$ Any automorphic form extends to a meromorphic section of a vector bundle over $\Gamma \backslash \mathbf{H}_2$; this is a form of Koecher's principle. If $f$ is holomorphic then it has a Fourier expansion $$f(z) = \sum_{B \in \Lambda_K} c_f(B) e^{2\pi i \mathrm{tr}(Bz)}$$ where $$\Lambda_K = \Big\{ \text{hermitian  matrices} \; B = (b_{ij}), \; b_{ij} \in \mathcal{O}_K^{\#}\Big\},$$ and where $c_f(B)$ may be nonzero only if $B$ is positive semidefinite. The function $f$ is a cusp form if its nonzero coefficients $c_f(B)$ only appear when $B$ is positive definite.

We further define 
$$
\mathrm{SU}_{2,2}(\mathcal{O}_K):= \mathrm{U}_{2,2}(\mathcal{O}_K)\cap \mathrm{SL}_4(\mathbb{C})
$$
and remark that $\mathrm{SU}_{2,2}(\mathcal{O}_K)=\mathrm{U}_{2,2}(\mathcal{O}_K)$ if and only if $d_K \neq -3, -4$.

Hermitian modular forms of degree two are essentially the same as modular forms on $\mathrm{O}(4, 2)$.  Indeed, there is an isomorphism between $\mathrm{SU}_{2,2}(\mathcal{O}_K)$ and the subgroup $$\widetilde{\mathrm{SO}}^+(L) = \mathrm{ker}(\mathrm{det} : \widetilde{\mathrm{O}}^+(L) \rightarrow \{\pm 1\})$$ for the lattice $$L = U \oplus U \oplus \mathcal{O}_K,$$ where $U$ is $\mathbb{Z}^2$ with quadratic form $(x,y) \mapsto xy$ and where $\mathcal{O}_K$ is the lattice $\mathcal{O}_K$ together with its norm form $N_{K/\mathbb{Q}}$, and this leads to an identification between modular forms for these groups which is worked out in detail in \cite{Dern, HK}. The full discriminant kernel $\widetilde{\mathrm{O}}^+(L)$ is generated by $\widetilde{\mathrm{SO}}^+(L)$ and by the reflection $$\rho : U \oplus U \oplus \mathcal{O}_K \longrightarrow U \oplus U \oplus \mathcal{O}_K, \; \; (x_1,y_1,x_2,y_2,\beta) \mapsto (x_1,y_1,x_2,y_2,-\overline{\beta}),$$ whose action on $\mathbf{H}_2$ is the involution $\begin{psmallmatrix} \tau & z_1 \\ z_2 & w \end{psmallmatrix} \mapsto \begin{psmallmatrix} \tau & -z_2 \\ -z_1 & w \end{psmallmatrix}$ with automorphy factor $(+1)$. $\widetilde{\mathrm{O}}^+(L)$ also contains the reflection $$\sigma : U \oplus U \oplus \mathcal{O}_K \longrightarrow U \oplus U \oplus \mathcal{O}_K, \; \; (x_1,y_1,x_2,y_2,\beta) \mapsto (x_1,y_1,y_2,x_2,\beta)$$ whose action on $\mathbf{H}_2$ is the involution $\begin{psmallmatrix} \tau & z_1 \\ z_2 & w \end{psmallmatrix} \mapsto \begin{psmallmatrix} w & z_1 \\ z_2 & \tau \end{psmallmatrix}$ with automorphy factor $(+1)$. Finally, we remark that $\widetilde{\mathrm{SO}}^+(L)$ contains the map $$\iota : U \oplus U \oplus \mathcal{O}_K \longrightarrow U \oplus U \oplus \mathcal{O}_K, \; \; (x_1,y_1,x_2,y_2,\beta) \mapsto (-x_1,-y_1,-x_2,-y_2,\beta)$$ whose action on $\mathbf{H}_2$ is $\begin{psmallmatrix} \tau & z_1 \\ z_2 & w \end{psmallmatrix} \mapsto \begin{psmallmatrix} \tau & -z_1 \\ -z_2 & w \end{psmallmatrix}$ with automorphy factor $(-1)$.

Considering the transformations under $\sigma$ and $-\rho \iota$ shows that any Hermitian modular form $F$ of weight $k$ without character that arises from a modular form on $\widetilde{\mathrm{O}}^+(L)$ (including Maass lifts and Borcherds products) satisfies $$F \Big( \begin{pmatrix} \tau & z_1 \\ z_2 & w \end{pmatrix} \Big) = \varepsilon F\Big( \begin{pmatrix} w & z_1 \\ z_2 & \tau \end{pmatrix} \Big) = (-1)^k \varepsilon F\Big( \begin{pmatrix} \tau & z_2 \\ z_1 & w \end{pmatrix} \Big),$$ where $\varepsilon = 1$ if $F$ has trivial character and $\varepsilon = -1$ if $F$ has the determinant character. A Hermitian modular form is called \textit{symmetric} (resp. \textit{skew-symmetric}) if it is invariant (resp. anti-invariant) under the involution $\begin{psmallmatrix} \tau & z_1 \\ z_2 & w \end{psmallmatrix} \mapsto \begin{psmallmatrix} w & z_1 \\ z_2 & \tau \end{psmallmatrix}$. Symmetric Hermitian modular forms of weight $k$ and trivial character for $\mathrm{SU}_{2,2}(\mathcal{O}_K)$ can therefore be identified with modular forms of weight $k$ and trivial character for $\widetilde{\mathrm{O}}^+(L)$. 

Let us also mention here that under the local isomorphism from $\mathrm{O}(4, 2)$ to $\mathrm{U}(2, 2)$ the rational-quadratic divisors take the explicit form $$\Delta_{a,B,c} := \Big\{ z \in \mathbf{H}_2: \; a \cdot  \mathrm{det}(z) + \mathrm{tr}(Bz) + c = 0 \Big\}, \; \; a,c \in \mathbb{Z}, \; B \in \Lambda_K,$$ and the discriminant of this divisor (the norm of the corresponding $\lambda \in L'$) is $$\mathrm{disc}(\Delta_{a,B,c}) = ac - \mathrm{det}(B).$$ The Siegel upper half-space $\mathbb{H}_2$ always appears as the discriminant $1/|d_K|$ divisor $\Delta_{0, B, 0}$ associated to $B = \begin{psmallmatrix} 0 & i / \sqrt{|d_K|} \\ -i/\sqrt{|d_K|} & 0 \end{psmallmatrix}$; more generally, all other $\Delta_{a;B;c}$ can be mapped biholomorphically to $\mathbb{H}_2$ under the action of $\mathrm{U}_{2,2}(\mathbb{C})$. One other important example of a rational-quadratic divisor is $$\Delta_{0, \begin{psmallmatrix} 1 & 0 \\ 0 & -1 \end{psmallmatrix}, 0} = \Big\{\begin{pmatrix} \tau & z_1 \\ z_2 & w \end{pmatrix} \in \mathbf{H}_2: \; \tau = w \Big\},$$ which represents the Heegner divisor $H(1)$. Note that $H(1)=H(1,0)$ is irreducible because the lattice $L$ is maximal. In the language of the orthogonal group $\Delta_{0, \begin{psmallmatrix} 1 & 0 \\ 0 & -1 \end{psmallmatrix}, 0}$ is the mirror of $\sigma$.

For any even integer $k \ge 4$, the \emph{Hermitian Eisenstein series} $\cE_k$ may be defined as the theta lift (i.e. the Borcherds additive lift) of the vector-valued Eisenstein series $$E_{k, 0} = \sum_{M \in \Gamma_{\infty} \backslash \mathrm{SL}_2(\mathbb{Z})} e_0 |_k M,$$ or rather the Hermitian modular form corresponding to this modular form on $\mathrm{O}^+(L)$; cf. section 3.3 of \cite{Dern} for a formula for its Fourier series.

\subsection{Free algebras of meromorphic modular forms}

Let $L = (L, Q)$ be a lattice of signature $(n, 2)$ with locally symmetric space $\mathcal{D}(L)$ defined as in section 2.1. A \emph{hyperplane arrangement} (in the sense of Looijenga \cite{L}) will mean a finite family of Heegner divisors $H(n_i, \gamma_i)$, $i=1,...,N$ with the following property: \emph{for any one-dimensional intersection $\ell$ of hyperplanes $\lambda^{\perp}$ with $\lambda$ primitive and $\lambda \in L+\gamma_i$, $Q(\lambda) = n_i$ for some $i$, the one-dimensional lattice $\ell \cap L$ is positive-definite.}

For such a hyperplane arrangement $H$ let $\mathcal{M}_k^!$ denote the space of meromorphic orthogonal modular forms of weight $k$ for $\widetilde{\mathrm{O}}^+(L)$ which are holomorphic away from $H$. Looijenga proved (\cite{L}, Corollary 7.5) that $\mathcal{M}_k^! = \{0\}$ for all $k < 0$ and that $\mathcal{M}_0^! = \mathbb{C}$, and further that the algebra $$\mathcal{M}_*^! = \bigoplus_{k \ge 0} \mathcal{M}_k^!$$ is finitely generated. $\mathrm{Proj}\, \mathcal{M}_*^!$ has similar properties to the Baily--Borel compactification of $\tilde Y_L$ and is called the \emph{Looijenga compactification} of $\tilde Y_L$ associated to the arrangement $H$.

\begin{remark} In the examples corresponding to Hermitian modular forms throughout the rest of the paper, the hyperplane arrangements $H$ satisfy the stronger property that any nontrivial intersection of two hyperplanes in $H$ is already disjoint from $\mathcal{D}(L)$. In this case, the quasi-pullback of a modular form in $\mathcal{M}_k^!$ to any hyperplane $\lambda^{\perp} \in H$ is a holomorphic modular form of weight $k-m_\lambda$, where $m_\lambda$ is the multiplicity of the pole $\lambda^{\perp}$. By applying Koecher's principle to modular forms on any hyperplane, we see that $k\geq m_\lambda$. This proves the fact that $\mathcal{M}_*^!$ is generated by modular forms of positive weight in a more elementary way.  Here it is essential that the signature of the lattice is $(n, 2)$ with $n \ge 4$. 
\end{remark}

We will be interested in examples where $\mathcal{M}_*^!$ is a polynomial ring without relations. The results of \cite{W} show that whether a given set of modular forms generates a given graded ring of holomorphic modular forms can be read off of their Jacobian, and it is natural to guess that similar results apply to $\mathcal{M}_*^!$. We will show that they do. If $f_0,...,f_n : \mathcal{A}(L) \rightarrow \mathbb{C}$ are meromorphic modular forms of level $\Gamma \le \mathrm{O}^+(L)$ of weights $k_0,...,k_n$ and characters $\chi_0,...,\chi_n$, and $z_0,...,z_n$ are coordinates on $\mathcal{A}(L)$, then the Jacobian is $$J = J(f_0,...,f_n) = \mathrm{det} \Big( \partial f_i / \partial z_j \Big)_{i,j=0}^n.$$ After applying the chain rule we see that $J$ transforms as a modular form of weight $n + \sum_{i=0}^n k_i$ and character $$\chi = \mathrm{det} \otimes \bigotimes_{i=0}^n \chi_i.$$

The Jacobian satisfies the product rule in each component and every meromorphic modular form can be written as a quotient $f = g/h$ where $g, h$ are holomorphic. After applying the equation $$J(g/h, f_1,...,f_n) = \frac{1}{h^2} \Big( h \cdot J(g, f_1,...,f_n) - g \cdot J(h, f_1,...,f_n) \Big)$$ and the analogous equations in the other components we see that $J(f_0,...,f_n)$ is meromorphic with poles at worst where any of $f_0,...,f_n$ has a pole.

In the case of Hermitian modular forms the Jacobian becomes the Rankin--Cohen--Ibukiyama bracket: $$J(f_0,...,f_4)\Big( \begin{pmatrix} \tau & z_1 \\ z_2 & w \end{pmatrix} \Big) = \mathrm{det} \begin{pmatrix} k_0 f_0 & k_1 f_1 & k_2 f_2 & k_3 f_3 & k_4 f_4 \\ \partial_{\tau} f_0 & \partial_{\tau} f_1 & \partial_{\tau} f_2 & \partial_{\tau} f_3 & \partial_{\tau} f_4 \\ \partial_{z_1} f_0 & \partial_{z_1} f_1 & \partial_{z_1} f_2 & \partial_{z_1} f_3 & \partial_{z_1} f_4 \\ \partial_{z_2} f_0 & \partial_{z_2} f_1 & \partial_{z_2} f_2 & \partial_{z_2} f_3 & \partial_{z_2} f_4 \\ \partial_{w} f_0 & \partial_{w} f_1 & \partial_{w} f_2 & \partial_{w} f_3 & \partial_{w} f_4 \end{pmatrix}.$$

Throughout the paper, we denote by $\mathcal{M}_*^!$ the ring of symmetric meromorphic Hermitian modular forms on $\mathrm{SU}_{2,2}(\mathcal{O}_K)$ with poles along the hyperplane arrangement $H$, that is, the ring of meromorphic modular forms for $\widetilde{\mathrm{O}}^+(L)$ which are holomorphic on $\mathcal{D}(L)\setminus H$. 
\begin{theorem}\label{th:j} Let $f_0,...,f_4 \in \mathcal{M}_*^!$. Suppose the Jacobian $J = J(f_0,...,f_4)$ is nonzero. 
\begin{itemize}
\item[(i)] The Jacobian $J$ vanishes on the Heegner divisor $D(1)\setminus H$.
\item[(ii)] Suppose $J$ has only a simple zero on $D(1)\setminus H$, and that all other poles or zeros of $J$ are contained in the hyperplane arrangement $H$. Then $$\mathcal{M}_*^! = \mathbb{C}[f_0,...,f_4].$$
\end{itemize}
\end{theorem}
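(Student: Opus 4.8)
The plan is to adapt the Jacobian criterion for free algebras of holomorphic modular forms from \cite{W} to the meromorphic setting, using part (i) and the hypotheses on the divisor of $J$ as the replacement for the input data in the holomorphic case. First I would establish algebraic independence: since $J(f_0,\dots,f_4)\neq 0$, the forms $f_0,\dots,f_4$ are algebraically independent over $\mathbb{C}$, so $\mathbb{C}[f_0,\dots,f_4]$ is a genuine polynomial ring and we have an inclusion $\mathbb{C}[f_0,\dots,f_4]\subseteq \mathcal{M}_*^!$ of graded rings of Krull dimension five. The goal is to show this inclusion is an equality, and the standard device is to compare Hilbert series: if one can show that the quotient module $\mathcal{M}_*^!/\mathbb{C}[f_0,\dots,f_4]$ is zero by a dimension/growth count, or more efficiently, to produce for an arbitrary $g\in\mathcal{M}_k^!$ an expression as a polynomial in the $f_i$ by descending induction on $k$.

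The engine of the induction is the following: given $g \in \mathcal{M}_k^!$, form the five Jacobians $J_i := J(f_0,\dots,f_{i-1},g,f_{i+1},\dots,f_4)$ obtained by replacing $f_i$ with $g$. Each $J_i$ is a meromorphic modular form of the appropriate weight, and by the product-rule computation in the excerpt its poles lie only on $H$ (since $g$ and all $f_j$ have poles only on $H$); moreover by part (i) each $J_i$ vanishes on $D(1)\setminus H$. Cramer's rule gives the identity $g \cdot J = \sum_{i=0}^{4} (\pm) J_i\, f_i$ — this is the key algebraic relation, valid because $J$ and the $J_i$ are, up to sign, the $4\times 4$ minors of the extended $6\times 5$ derivative matrix and the determinant of a matrix with a repeated column vanishes. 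Now I claim each quotient $h_i := J_i / J$ lies in $\mathcal{M}_*^!$: it is meromorphic modular of weight $k - k_i$; its only possible poles are on $D(1)\setminus H$ coming from the simple zero of $J$ there, but each $J_i$ also vanishes on $D(1)\setminus H$ by (i), so this zero cancels; and any pole of $h_i$ off $D(1)\cup H$ would have to come from a zero of $J$ off $D(1)\cup H$, which the hypothesis of (ii) forbids. Hence $h_i\in\mathcal{M}^!_{k-k_i}$, and $g = \sum_{i}(\pm) h_i f_i$ expresses $g$ in terms of lower-weight elements of $\mathcal{M}_*^!$, completing the induction once the base cases $\mathcal{M}_0^! = \mathbb{C}$ (Looijenga) and the vanishing $\mathcal{M}_k^! = 0$ for $k<0$ are invoked, together with the observation that $\min_i k_i \ge 1$ so that $k - k_i < k$ strictly.

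The main obstacle is the divisor bookkeeping that shows $h_i = J_i/J$ is genuinely holomorphic on $\mathcal{D}(L)\setminus H$ — i.e. that the simple zero of $J$ along $D(1)\setminus H$ is its \emph{only} zero outside $H$, and that this zero is cancelled by $J_i$ with the correct multiplicity \emph{generically} along each component of $D(1)$ but possibly with higher order along lower-dimensional strata; one must check that no spurious poles are introduced there. In the orthogonal setting this is exactly the reflection argument: $D(1)$ is the mirror of a reflection, any modular form (here $J$ and each $J_i$) must vanish to odd order along it precisely because of the sign of the determinant character picked up, and one reads off that $J$ vanishes to order exactly one and $J_i$ to order at least one, so the quotient is regular in codimension one; Hartogs/Koecher-type extension then gives holomorphy everywhere on $\mathcal{D}(L)\setminus H$. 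One should also verify the behavior at the boundary (cusps) so that $h_i$ satisfies the meromorphy condition there, which is automatic since $n = 4 > 2$. I would also need to double-check the character: $g$ and the $f_i$ are taken with trivial character (symmetric forms), the Jacobian picks up $\det$, and the ratios $J_i/J$ have trivial character again, consistent with landing in $\mathcal{M}_*^!$.
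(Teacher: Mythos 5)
Your proposal is correct and follows essentially the same route as the paper: the identity obtained from a determinant with a repeated row, the observation that each replaced Jacobian $J_i$ vanishes on $D(1)\setminus H$ and has poles only on $H$ so that $J_i/J$ lies in $\mathcal{M}_*^!$, and descent on the weight using $\mathcal{M}_0^!=\mathbb{C}$ and $\mathcal{M}_k^!=0$ for $k<0$ (the paper phrases this as a minimal counterexample rather than induction, which is the same thing). The only slip is cosmetic: the coefficients in the relation are the weights, $k\, g\, J=\sum_{i=0}^4(-1)^i k_i f_i J_i$, not just signs $(\pm)$, since the first row of the Rankin--Cohen--Ibukiyama determinant is $(k_0f_0,\dots,k_4f_4)$.
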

By \cite[Theorem 2.5 (2)]{W}, the Jacobian $J$ is nonzero if and only if $f_0,...,f_4$ are algebraically independent.
\begin{proof} (i) Since the Jacobian has the determinant character for $\widetilde{\mathrm{O}}^+(L)$, it vanishes on all mirrors of reflections in the discriminant kernel which are not contained in $H$ (see \cite[Theorem 2.5 (4)]{W}). Recall that the mirrors of reflections in $\widetilde{\mathrm{O}}^+(L)$ are exactly the rational quadratic divisors associated to vectors of norm $1$ in $L$. For the lattices corresponding to Hermitian modular forms, the union of these mirrors is the Heegner divisor $D(1)$ (in fact, $D(1)=H(1)+H(1/4)$ if $4|d_K$, otherwise $D(1)=H(1)$). This proves the desired claim. 

(ii) This is essentially the same argument as used in \cite[Theorem 5.1]{W}. Let $f_5 \in \mathcal{M}_*^!$ be a modular form of minimal weight $k_5$ that does not lie in the subring generated by $f_0,...,f_4$, and compute the determinant by cofactor expansion: $$0 = \mathrm{det} \begin{pmatrix} k_0 f_0 & ... & k_4 f_4 & k_5 f_5 \\ k_0 f_0 & ... & k_4 f_4 & k_5 f_5 \\ \nabla f_0 & ... & \nabla f_4 & \nabla f_5 \end{pmatrix} = \sum_{i=0}^5 (-1)^i k_i f_i J_i, \; \; J_i = J(f_0,...,\hat f_i,...,f_5).$$ Each $J_i$ vanishes on $D(1)\setminus H$ and by assumption (ii) is a multiple of $J$ in the ring $\mathcal{M}_*^!$, say $J_i = J \cdot g_i$; and of course $g_5$ is $1$. This yields the representation $$f_5 = \sum_{i=0}^4 \frac{(-1)^i k_i}{k_5} f_i g_i.$$ Each $g_i$ lies in $\mathbb{C}[f_0,...,f_4]$ by minimality of $f_5$; but then $f_5$ also lies in $\mathbb{C}[f_0,...,f_4]$, a contradiction.
\end{proof}

\begin{remark} Theorem \ref{th:j} generalizes in an obvious way to meromorphic modular forms on orthogonal groups of lattices of higher rank. We will construct some free algebras of meromorphic modular forms on lattices of higher rank in a separate paper. 
\end{remark}

\begin{remark} If $f_0,...,f_4$ freely generate the algebra of symmetric modular forms, then any skew-symmetric modular form (that is, a modular form with the determinant character) vanishes on $D(1)\setminus H$ as in the proof of Theorem \ref{th:j}, and is therefore a multiple of $J = J(f_0,...,f_4)$ under the assumption in Theorem \ref{th:j} (ii). It follows that the full ring of meromorphic modular forms is $$\mathbb{C}[f_0,...,f_4, J] / (J^2 - P(f_0,...,f_4))$$ for a uniquely determined polynomial $P$.
\end{remark}

\section{Modular forms with poles on the Siegel half-space}

In this section we consider the simplest possible hyperplane arrangement in $\mathbf{H}_2$: the Siegel upper half-space $\mathbb{H}_2$ together with its images under $\mathrm{SU}_{2,2}(\mathcal{O}_K)$. The intersection of any two hyperplanes in this arrangement determines a Heegner divisor in $\mathbb{H}_2$ whose irreducible components have discriminant at most $1 / |d_K|$, and is therefore empty whenever $d_K \notin \{-3, -4\}$. By a separate computation one can check that the same claim holds when $d_K = -4$. (It is not satisfied when $d_K = -3$; indeed, in this case, there is a cusp form of weight $9$ that vanishes only on the orbits of $\mathbb{H}_2$, cf. \cite{DK1}.)

We will show by cases that the rings of meromorphic Hermitian modular forms with poles confined to these hypersurfaces are, when the underlying number field has discriminant $-4,-7,-8$ or $-11$, freely generated by forms of weights $2,4,6,8,10$; and $2,3,4,7,8$; and $2,3,4,6,8$; and $2,3,4,5,6$ respectively. Finally we will prove that the ring in question cannot be generated by only five forms for any other discriminant.

\subsection{Discriminant $-4$}

Symmetric Hermitian modular forms for the group $\Gamma = \mathrm{SU}_{2, 2}(\mathbb{Z}[i])$ correspond to modular forms for the discriminant kernel of the root lattice $L = 2U \oplus 2A_1$, i.e. $\mathbb{Z}^6$ with Gram matrix $\mathbf{S} = \begin{psmallmatrix} 0 & 0 & 0 & 0 & 0 & 1 \\ 0 & 0 & 0 & 0 & 1 & 0 \\ 0 & 0 & 2 & 0 & 0 & 0 \\ 0 & 0 & 0 & 2 & 0 & 0 \\ 0 & 1 & 0 & 0 & 0 & 0 \\ 1 & 0 & 0 & 0 & 0 & 0 \end{psmallmatrix}$. Through this correspondence the $\Gamma$-orbit of the Siegel half-space $\mathbb{H}_2 = \{z \in \mathbf{H}_2: \; z^T = z\}$ is an irreducible Heegner divisor of discriminant $1/4$ lines: $$H := H(1/4, (0, 0, 1/2, 0, 0, 0)) = \bigcup \{\lambda^{\perp}: \; \lambda \in (0,0,1/2,0,0,0) + \mathbb{Z}^6; \; \lambda^T \mathbf{S} \lambda = 1/2\}.$$

\begin{remark} There are two $\Gamma$-orbits of discriminant $1/4$ divisors, i.e. $H$ and $H(1/4, (0, 0, 0,  1/2, 0, 0))$. One is represented by the Siegel half-space $\mathbb{H}_2$, and the other by $$\{z \in \mathbf{H}_2: \; \mathrm{tr}(\begin{psmallmatrix} 0 & 1/2 \\ 1/2 & 0 \end{psmallmatrix} z) = 0\} = \{z \in \mathbf{H}_2: \; z_1 = -z_2\}.$$ Under the full Hermitian modular group $\mathrm{U}_{2,2}(\mathbb{Z}[i])$ these orbits coincide. It is crucial that we consider only the subgroup $\Gamma$ because there are modular forms holomorphic away from $\mathbf{H}_2 \backslash (\Gamma \cdot \mathbb{H}_2 \cup \Gamma \cdot \{z: \; z_1 = -z_2\})$ of negative weight such as the form $\phi_4^{-1}$ below, and therefore the spaces of these modular forms are not finite-dimensional. The ring of holomorphic modular forms for this group was determined by Dern and Krieg \cite{DK1}, building on earlier work of Freitag \cite{F}.
\end{remark}

The Weil representation attached to $\mathbf{S}$ admits the weight $(-1)$ nearly-holomorphic vector-valued modular form
\begin{align*} F_{-1}(\tau) &= \frac{8 \eta(2\tau)^{14}}{\eta(\tau)^{12} \eta(4\tau)^4} e_0 +  \frac{\eta(\tau)^2}{\eta(2\tau)^4} \sum_{\substack{v \in L'/L \\ Q(v) = 1/4}} e_v - \frac{32 \eta(2\tau)^2 \eta(4\tau)^4}{\eta(\tau)^8} \sum_{\substack{v \in L'/L \\ Q(v) = 1/2}} e_v,
\end{align*}
where as usual $\eta(\tau) = q^{1/24} \prod_{n=1}^{\infty} (1 - q^n)$. This is mapped under the Borcherds lift to a weight 4 modular form $\phi_4$ with quadratic character and simple zeros on both $\Gamma$-orbits of discriminant $1/4$ hyperplanes (i.e. on the Heegner divisor $H(1/4)$), as well as a nearly-holomorphic modular form
$$G_{-1}(\tau) = q^{-1} e_0 + 68 e_0 + 4928 q^{1/2} e_{(0,0,1/2,1/2,0,0)} + ...$$ which lifts to a skew-symmetric Borcherds product $\Phi_{34}$ of weight $34$ with simple zeros exactly on the Heegner divisor $D(1)=H(1)\cup H(1/4)$. In weight one we have (up to scalar multiple) a unique nearly-holomorphic modular form $F_1(\tau)$ whose image under the singular additive lift is a weight two meromorphic modular form $\phi_2$ with double poles only on $H$: $$F_1(\tau) = q^{-1/4} e_{(0,0,1/2,0,0,0)} - 2 e_{(0,0,0,0,0,0)} + 56 q^{1/2} e_{(0,0,1/2,1/2,0,0) } + ...$$

Besides $\phi_2, \phi_4$ we also need the Hermitian Eisenstein series $\cE_4, \cE_6, \cE_{10}$ of weights $4, 6, 10$.

\begin{theorem} $$\mathcal{M}_*^! = \mathbb{C}[\phi_2, \cE_4, \cE_6, \phi_4^2, \cE_{10}].$$
\end{theorem}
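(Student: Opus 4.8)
The plan is to apply Theorem~\ref{th:j}(ii) with the five generators $f_0,\dots,f_4 = \phi_2, \cE_4, \cE_6, \phi_4^2, \cE_{10}$, so that the entire argument reduces to two things: checking that these forms are algebraically independent (equivalently that their Jacobian $J$ is nonzero), and pinning down the divisor of $J$ precisely enough to see that its only zero outside the arrangement $H$ is a simple zero on $D(1)$.

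\medskip

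First I would record the weights: the Jacobian $J = J(\phi_2,\cE_4,\cE_6,\phi_4^2,\cE_{10})$ has weight $4 + (2+4+6+8+10) = 34$ and carries the determinant character times the product of the characters of the five generators. Since $\cE_4,\cE_6,\cE_{10}$ have trivial character and $\phi_2$ has trivial character (its multiplier, being a singular lift, is trivial), while $\phi_4$ has the quadratic character so $\phi_4^2$ has trivial character, the Jacobian has exactly the determinant character. It is therefore natural to compare $J$ with the skew-symmetric Borcherds product $\Phi_{34}$ of weight $34$, which also has the determinant character and whose divisor is known to be exactly the simple Heegner divisor $D(1) = H(1) \cup H(1/4)$. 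The key step is to show $J$ and $\Phi_{34}$ agree up to a nonzero scalar. Since both transform with the determinant character and weight $34$, their quotient $J/\Phi_{34}$ is a meromorphic symmetric modular form of weight $0$; if we can show it is holomorphic away from $H$ and has no pole on $H$ either, Looijenga's $\mathcal{M}_0^! = \mathbb{C}$ forces it to be constant. By Theorem~\ref{th:j}(i) $J$ vanishes on $D(1)\setminus H$, so $J/\Phi_{34}$ is holomorphic along $D(1)\setminus H$; away from $D(1)\cup H$ it is holomorphic because $\Phi_{34}$ is nonvanishing there and $J$ is holomorphic (the only possible poles of $J$ are on $H$, where $\phi_2$ has its double pole). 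The remaining point is to control the order of the pole of $J$ along $H$: $\phi_2$ contributes a double pole, and differentiating once in the normal direction to $H$ can worsen this to a triple pole in the column $\nabla\phi_2$, so a priori $J$ has a pole of order $\le 3$ along $H$ while $\Phi_{34}$ is holomorphic there. To rule this out one uses the symmetric-function structure of the Rankin--Cohen--Ibukiyama bracket near $H = \Gamma\cdot\mathbb{H}_2$: restricting to the Siegel half-space, the quasi-pullbacks of $\cE_4,\cE_6,\cE_{10}$ are Siegel modular forms and the normal derivative of the bracket can be expressed through the Siegel Jacobian, from which one reads off that $J$ in fact has at worst a double pole along $H$ and hence $J/\Phi_{34}$ has no pole on $H$ (or more simply: one checks the leading Laurent coefficient of $J$ along $H$ directly from the Fourier expansions). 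Either way, $J = c\,\Phi_{34}$ for some $c \in \mathbb{C}$, and in particular $J \neq 0$ once $c\neq 0$.

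\medskip

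To see $c \neq 0$ I would exhibit explicitly a single Fourier coefficient of $J$ that is nonzero, using the Fourier expansions of $\phi_2$ (from $F_1$), the Eisenstein series (\cite{Dern}, §3.3), and $\phi_4 = \Psi_{F_{-1}}$; alternatively, and more conceptually, one argues that $\phi_2,\cE_4,\cE_6,\phi_4^2,\cE_{10}$ are algebraically independent because their restrictions to a suitable boundary component or their $q$-expansions along $\mathbb{H}_2$ are, invoking \cite[Theorem 2.5(2)]{W}. Once $J = c\,\Phi_{34}$ with $c\neq 0$, its divisor is exactly $D(1)$ with multiplicity one and nothing outside $H\cup D(1)$, so the hypotheses of Theorem~\ref{th:j}(ii) are satisfied verbatim, and we conclude $\mathcal{M}_*^! = \mathbb{C}[\phi_2,\cE_4,\cE_6,\phi_4^2,\cE_{10}]$.

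\medskip

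The main obstacle is the bookkeeping of the pole order of $J$ along the arrangement $H$: Theorem~\ref{th:j}(ii) requires that $J$ have \emph{no} zeros or poles outside $H$ except the simple zero on $D(1)$, and a naive weight count would also permit $J$ to have a pole of order up to three along $H$ while $\Phi_{34}$ has none, which would break the identification. Resolving this cleanly requires either a careful local analysis of the Rankin--Cohen--Ibukiyama determinant near $H$ — showing the potential triple pole coming from $\nabla\phi_2$ is cancelled because the other four columns degenerate on $H$ in a compatible way — or an explicit enough computation of the leading Laurent coefficient of $J$ along $H$. I expect the former to be the conceptually right argument: on $H = \Gamma\cdot\mathbb{H}_2$ the quasi-pullbacks $\mathrm{Q}\cE_4, \mathrm{Q}\cE_6, \mathrm{Q}(\phi_4^2), \mathrm{Q}\cE_{10}$ are Siegel modular forms of weights $4,6,8,10$, which (since the Siegel ring in genus $2$ is $\mathbb{C}[E_4,E_6,\chi_{10},\chi_{12}]$) cannot be algebraically independent in only four variables, so a relation among them forces the normal-derivative contribution to drop, keeping the pole of $J$ along $H$ at order exactly two and hence $J/\Phi_{34}$ pole-free.
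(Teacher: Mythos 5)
Your overall strategy is the paper's: form $J=J(\phi_2,\cE_4,\cE_6,\phi_4^2,\cE_{10})$, which has weight $34$ and the determinant character, compare it with the skew-symmetric Borcherds product $\Phi_{34}$, conclude $J/\Phi_{34}\in\mathcal{M}_0^!=\mathbb{C}$, and invoke Theorem~\ref{th:j}. However, what you single out as ``the main obstacle'' --- controlling the pole order of $J$ along $H$ so that $J/\Phi_{34}$ is pole-free on $H$ --- is not an obstacle at all, and your proposed resolution of it is incorrect. By definition, $\mathcal{M}_0^!$ consists of weight-zero meromorphic forms that are holomorphic \emph{away from} $H$; poles of arbitrary order along $H$ are allowed, and Looijenga's result $\mathcal{M}_0^!=\mathbb{C}$ applies to all of them. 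So once you know that $J$ is holomorphic outside $H$ (only $\phi_2$ has poles, and they lie on $H$) and that $J$ vanishes on $D(1)\setminus H$ by part (i) of Theorem~\ref{th:j}, the quotient $J/\Phi_{34}$ already lies in $\mathcal{M}_0^!$ and is constant; no local analysis of the Rankin--Cohen--Ibukiyama determinant near $H$ is needed. Your attempted analysis moreover contains two errors. First, four Siegel modular forms of genus two certainly \emph{can} be algebraically independent --- the genus-two ring $\mathbb{C}[E_4,E_6,\chi_{10},\chi_{12}]$ is itself a polynomial ring in four generators --- so ``cannot be algebraically independent in only four variables'' is not a valid inference (the restrictions of weights $4,6,8,10$ are in fact dependent, but only because $M_8(\mathrm{Sp}_4(\mathbb{Z}))=\mathbb{C}E_4^2$). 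Second, even if $J$ had exactly a double pole along $H$, the quotient $J/\Phi_{34}$ would still have a pole there, since $\Phi_{34}$ has a simple zero on all of $H(1/4)\supseteq H$; your claimed conclusion ``hence $J/\Phi_{34}$ pole-free'' does not follow. (A posteriori, the constancy of $J/\Phi_{34}$ shows that $J$ has a simple \emph{zero}, not a pole, along $H$.)

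The remaining point, the nonvanishing of $J$, you leave vague (``a suitable boundary component''). The paper does this concretely: it pulls back to the \emph{second} discriminant-$1/4$ divisor $(0,0,0,1/2,0,0)^{\perp}$, i.e.\ the locus $z_1=-z_2$, where $\phi_2$, $\cE_4$, $\cE_6$ and a suitable linear combination of $\cE_4\cE_6$ and $\cE_{10}$ restrict to algebraically independent Siegel forms of weights $2$ (meromorphic), $4$, $6$, $10$, while $\phi_4$ vanishes identically --- which together give the algebraic independence of the five generators. Note that restricting to $\mathbb{H}_2$ itself, as you suggest, is the wrong divisor to use naively: $\phi_2$ has its double pole there and $\phi_4$ also vanishes there, so one would have to work with quasi-pullbacks and the argument becomes murkier.
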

\begin{proof} The forms $\phi_2, \cE_4, \cE_6, \phi_4, \cE_{10}$ are algebraically independent, because: the images of $\phi_2, \cE_4, \cE_6$ and an appropriate linear combination of $\cE_4\cE_6$ and $\cE_{10}$ under the pullback map to $(0, 0, 0, 1/2, 0, 0)^{\perp}$ are the algebraically independent Siegel modular forms of weight two (with a double pole on the diagonal) and weights 4, 6, 10, and because $\phi_4$ has a zero there.

Now the Jacobian $J = J(\phi_2, \cE_4, \cE_6, \phi_4^2, \cE_{10})$ is nonzero, holomorphic away from the divisor $H$, has weight $$\mathrm{wt}(J) = 2 + 4 + 6 + 8 + 10 + 4 = 34,$$ and it vanishes on $D(1)\setminus H = H(1,0)\cup \Gamma\cdot \{z: \; z_1 = -z_2\}$, so $J / \Phi_{34} \in \mathcal{M}_0^! = \mathbb{C}$. The claim follows after applying Theorem \ref{th:j}.
\end{proof}

\begin{remark} In the notation of \cite{DK1} the meromorphic form $\phi_2$ is the form $$\phi_2 = \frac{a \cE_4 \cE_6 + b \cE_{10} + c \phi_{10}}{\phi_4^2}$$ where $a\cE_4 \cE_6 + b \cE_{10} + c \phi_{10}$ is the unique linear combination that vanishes on $(0, 0, 0, 1/2, 0, 0)^{\perp}$.
\end{remark}

\subsection{Discriminant $-7$}

The signature $(4, 2)$ lattice whose modular forms correspond to degree-two modular forms for the group $\Gamma = \mathrm{SU}_{2,2}(\mathcal{O}_K)$ with $K = \mathbb{Q}(\sqrt{-7})$ is $L = \mathbb{Z}^6$ with Gram matrix $\begin{psmallmatrix} 0 & 0 & 0 & 0 & 0 & 1 \\ 0& 0 & 0 & 0 & 1 & 0 \\ 0 & 0 & 2 & 1 & 0 & 0\\ 0 & 0 & 1 & 4 & 0 & 0 \\ 0 & 1 & 0 & 0 & 0 & 0 \\ 1 & 0 & 0 & 0 & 0 & 0 \end{psmallmatrix}$. Generators and relations for the ring of holomorphic modular forms were determined in \cite{bw2}. We will compute the ring of meromorphic modular forms holomorphic away from the Siegel upper half-space: $$\mathcal{M}_*^! = \{f \in \mathcal{A}_k, \; f \; \text{holomorphic on} \; \mathbf{H}_2 \backslash (\Gamma \cdot \mathbb{H}_2)\}.$$

Since the discriminant is prime, we can construct input forms into the singular lift from nearly-holomorphic modular forms in $M_*^{!, -}(\Gamma_0(7), \chi)$ using the Bruinier--Bundschuh correspondence \cite{BB} (see also the constructions in \cite{bw2}). There are Borcherds products $\phi_7$ and $\Phi_{28}$ obtained from the weight $(-1)$ forms $$F_{-1}(\tau) = 2q^{-2} + 6q^{-1} + 14 - 38q^3 - 96q^5 \pm ...$$ and $$G_{-1}(\tau) = q^{-7} + 14q^{-1} + 56 + 4522q^3 + 27846q^5 + ...$$ with divisors $$\mathrm{div}\, \phi_7 = 3 H(1/7) + H(2/7), \;\; \mathrm{div}\, \Phi_{28} = 7 H(1/7) + H(1),$$ and there is a weight $1$ form $$F_1(\tau) = 2q^{-1} - 2 + 50q^3 - 52q^5 \pm ...$$ that lifts to a meromorphic Hermitian modular form $\phi_2$ with order two poles exactly on $\Gamma \cdot \mathbb{H}_2$. Finally we need the weight $2$ vector-valued modular form $$F_2(\tau) = q^{-1/7} (e_{(0, 0, 1/7, 5/7, 0, 0)} - e_{(0,0,6/7,2/7,0,0)}) + 13 q^{3/7}(e_{(0,0,5/7,4/7,0,0)} - e_{(0,0,2/7,3/7,0,0)}) + O(q^{5/7})$$ which lifts to a meromorphic form $\phi_3$ with triple poles on $\Gamma \cdot \mathbb{H}_2$.

In the notation of \cite{bw2}, $$\phi_2 = \frac{m_9}{b_7}, \; \phi_3 = \frac{m_{10}^{(2)}}{b_7}, \; \phi_7 = b_7.$$

Let $\cE_4$ and $\cE_8$ be the Eisenstein series of weight $4$ and $8$ for $\Gamma=\mathrm{SU}_{2,2}(\mathcal{O}_K)$ respectively.

\begin{theorem} $$\mathcal{M}_*^! = \mathbb{C}[\phi_2, \phi_3, \cE_4, \phi_7, \cE_8].$$
\end{theorem}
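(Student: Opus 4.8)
The plan is to follow the template of the discriminant $-4$ case, applying Theorem \ref{th:j}(ii) with the skew-symmetric Borcherds product $\Phi_{28}$.

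\emph{Algebraic independence.} I would first show that $\phi_2, \phi_3, \cE_4, \phi_7, \cE_8$ are algebraically independent. Since $\mathrm{div}\,\phi_7 = 3H(1/7) + H(2/7)$, the form $\phi_7$ vanishes identically on any irreducible component $\mathcal{D}'$ of the discriminant-$2/7$ Heegner divisor. Such a $\mathcal{D}'$ is not a component of $\Gamma\cdot\mathbb{H}_2 = H(1/7)$, so the meromorphic forms $\phi_2, \phi_3$ — whose poles lie only on $H(1/7)$ — are holomorphic at the generic point of $\mathcal{D}'$ and restrict to meromorphic modular forms there. By the theta-contraction identity $\Phi_F|_{\mathcal{D}_\lambda(L)} = \Phi_{\vartheta F}$ applied to $\phi_2 = \Phi_{F_1}$, $\phi_3 = \Phi_{F_2}$, $\cE_4$ and $\cE_8$, these restrictions are Siegel modular forms of degree two and weights $2, 3, 4, 8$, the ones of weight $2$ and $3$ having a pole on the diagonal; one checks (by identifying them explicitly, or via their Fourier--Jacobi expansions) that these four are algebraically independent. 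A polynomial relation $P(\phi_2, \phi_3, \cE_4, \phi_7, \cE_8) = 0$ could be written $P = \sum_j P_j(\phi_2, \phi_3, \cE_4, \cE_8)\,\phi_7^{\,j}$ with $P_{j_0} \neq 0$ of minimal order $j_0$; dividing by $\phi_7^{\,j_0} \neq 0$ and restricting to $\mathcal{D}'$ would yield $P_{j_0}(\phi_2|_{\mathcal{D}'}, \phi_3|_{\mathcal{D}'}, \cE_4|_{\mathcal{D}'}, \cE_8|_{\mathcal{D}'}) = 0$, a contradiction. By \cite[Theorem 2.5(2)]{W} the Jacobian $J = J(\phi_2, \phi_3, \cE_4, \phi_7, \cE_8)$ is then nonzero.

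\emph{Identifying the Jacobian.} Next I would note that $J$ is holomorphic away from $H = H(1/7)$, carries the determinant character (as $\phi_7$ and the Eisenstein series and additive lifts all have trivial character), and has weight $\mathrm{wt}(J) = 4 + (2 + 3 + 4 + 7 + 8) = 28$. Since $4 \nmid 7$ we have $D(1) = H(1)$, so by Theorem \ref{th:j}(i) the form $J$ vanishes on $D(1)\setminus H = H(1)$. The Borcherds product $\Phi_{28}$ has weight $28$ and divisor $\mathrm{div}\,\Phi_{28} = 7H(1/7) + H(1)$, every component of which other than $H(1)$ lies in $H$; therefore $J/\Phi_{28}$ is holomorphic away from $H$, has trivial character and weight zero, and hence lies in $\mathcal{M}_0^! = \mathbb{C}$. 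Thus $J$ is a nonzero scalar multiple of $\Phi_{28}$, so it has a simple zero on $D(1)\setminus H$ and all of its remaining zeros and poles lie in $H$, and Theorem \ref{th:j}(ii) gives $\mathcal{M}_*^! = \mathbb{C}[\phi_2, \phi_3, \cE_4, \phi_7, \cE_8]$.

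\emph{Main obstacle.} I expect the delicate point to be the algebraic-independence step: pinning down the four restricted Siegel modular forms and confirming their independence, which is cleanest if one knows the structure of the ring of meromorphic degree-two Siegel modular forms holomorphic away from the diagonal. A computational alternative is to verify $J \neq 0$ directly from finitely many of its Fourier coefficients, after which the remainder of the argument is unchanged.
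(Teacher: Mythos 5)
Your proposal is correct in outline, and its second half (identifying $J$ with $\Phi_{28}$ via the weight count, the vanishing on $D(1)\setminus H = H(1)$, and $\mathcal{M}_0^! = \mathbb{C}$) is exactly the paper's argument. Where you diverge is the algebraic independence step. The paper clears denominators: it observes that independence of $\phi_2, \phi_3, \cE_4, \phi_7, \cE_8$ follows from that of the \emph{holomorphic} forms $\phi_2\phi_7, \phi_3\phi_7, \cE_4, \phi_7, \cE_8$, and then verifies that $J(\cE_4,\phi_7,\cE_8,\phi_2\phi_7,\phi_3\phi_7)\neq 0$ by direct computation with the first eight Fourier--Jacobi coefficients (or by citing the structure results of \cite{bw2}). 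You instead restrict to a component $\mathcal{D}'$ of $H(2/7)$, on which $\phi_7$ vanishes, and reduce to independence of the four restricted forms --- which is precisely the strategy the paper uses for discriminant $-4$, so the route is legitimate and arguably more conceptual. Two caveats: your description of the restrictions as ``Siegel modular forms of degree two'' with poles ``on the diagonal'' is asserted rather than checked --- $\mathcal{D}'$ is the orthogonal modular threefold attached to the signature $(3,2)$ lattice $\lambda^{\perp}\cap L$ for $\lambda$ of norm $2/7$, whose modular forms are paramodular-type forms of some level rather than level-one Siegel forms, and the divisor $\mathcal{D}'\cap H(1/7)$ carrying the poles must be identified; and the independence of the four restrictions is still a deferred computation, comparable in effort to the paper's direct Fourier--Jacobi check. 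Either way the burden is a finite verification, so both approaches are sound; yours trades a five-form Jacobian computation in four variables for a four-form identification problem on a smaller modular variety.
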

\begin{proof} The algebraic independence of $\phi_2, \phi_3, \cE_4, \phi_7, \cE_8$ follows immediately from that of the (holomorphic) modular forms $\phi_2 \phi_7, \phi_3 \phi_7, \cE_4, \phi_7, \cE_8$. The Jacobian $J(\cE_4, \phi_7, \cE_8, \phi_2 \phi_7, \phi_3 \phi_7)$ can be shown to be nonzero by direct computation (for this one needs at least the first $8$ Fourier--Jacobi coefficients), or the algebraic independence can be derived from the results of \cite{bw2}. Then $$J = J(\phi_2, \phi_3, \cE_4, \phi_7, \cE_8)$$ has weight $28$ and vanishes on the Heegner divisor $H(1)$, so $J / \Phi_{28} \in \mathcal{M}_0^! = \mathbb{C}$. The claim follows from Theorem \ref{th:j}.
\end{proof}
This implies that, up to a nonzero constant multiple, the Jacobian of the holomorphic forms above is $$J(\cE_4, \phi_7, \cE_8, \phi_2 \phi_7, \phi_3 \phi_7) = \phi_7^2 J(\phi_2, \phi_3, \cE_4, \phi_7, \cE_8) = \phi_7^2 \Phi_{28} \in \mathcal{M}_{42}.$$

\subsection{Discriminant $-8$}

The lattice $L$ we take in this section has Gram matrix $\begin{psmallmatrix} 0 & 0 & 0 & 0 & 0 & 1 \\ 0 & 0 & 0 & 0 & 1 & 0 \\ 0 & 0 & 2 & 0 & 0 & 0 \\ 0 & 0 & 0 & 4 & 0 & 0 \\ 0 & 1 & 0 & 0 & 0 & 0 \\ 1 & 0 & 0 & 0 & 0 &0 \end{psmallmatrix}$. We will use the nearly-holomorphic modular forms of weight $(-1)$ whose Fourier expansions begin $$F_{-1}(\tau) = q^{-1/4} e_{(0,0,1/2,0,0,0)} + 2 q^{-1/8} (e_{(0,0,0,1/4,0,0)} + e_{(0,0,0,3/4,0,0)}) + 6e_{(0,0,0,0,0,0)} + ...$$ $$G_{-1}(\tau) = q^{-1/2} e_{(0,0,0,1/2,0,0)} - 4q^{-1/8} (e_{(0,0,0,1/4,0,0)} + e_{(0,0,0,3/4,0,0)}) + 6 e_{(0,0,0,0,0,0)} + ...$$ $$H_{-1}(\tau) = q^{-1} e_{(0,0,0,0,0,0)} + 8q^{-1/8} (e_{(0,0,0,1/4,0,0)} + e_{(0,0,0,3/4,0,0)}) + 54 e_{(0,0,0,0,0,0)} + ...$$ These lift to Borcherds products $\psi_3, \phi_3, \Phi_{27}$ with the following properties: \\
(1) $\psi_3$ is a weight $3$ cusp form with a quadratic character and divisor $$\mathrm{div}\, \psi_3 = 2 H(1/8) + H(1/4);$$
(2) $\phi_3$ is a meromorphic weight $3$ form with trivial character and divisor $$\mathrm{div}\, \phi_3 = -3 H(1/8) + H(1/2);$$
(3) $\Phi_{27}$ is a skew-symmetric cusp form of weight $27$ with divisor $$\mathrm{div}\, \Phi_{27} = 8H(1/8) + H(1/4) + H(1).$$
The form $\phi_3$ can also be constructed as a singular additive lift. We also need the meromorphic form $\phi_2$ defined as the singular additive lift of the weight $1$ form $$F_1(\tau) = q^{-1/8} (e_{(0,0,0,1/4,0,0)} + e_{(0,0,0,3/4,0,0)}) - 2 e_{(0,0,0,0,0,0)} + ... \in M_1^!(\rho).$$

Let $\mathcal{M}_*^!$ be the ring of meromorphic modular forms that are holomorphic on $\mathbf{H}_2 \backslash (\Gamma \cdot \mathbb{H}_2)$.

\begin{theorem} $$\mathcal{M}_*^! = \mathbb{C}[\phi_2, \phi_3, \cE_4, \psi_3^2, \cE_8]$$
\end{theorem}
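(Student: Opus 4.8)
The plan is to follow the template established in the previous two cases (discriminants $-4$ and $-7$): verify algebraic independence of the five generators, compute the weight of their Jacobian, identify it with a known skew-symmetric Borcherds product up to a constant, and then invoke Theorem~\ref{th:j}. The five candidate generators are $\phi_2, \phi_3, \cE_4, \psi_3^2, \cE_8$ of weights $2, 3, 4, 6, 8$; their Jacobian $J = J(\phi_2, \phi_3, \cE_4, \psi_3^2, \cE_8)$ has weight $2+3+4+6+8+4 = 27$, which matches the weight of the skew-symmetric cusp form $\Phi_{27}$.

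First I would establish algebraic independence. The cleanest route, as in the discriminant $-7$ case, is to clear the poles: $\phi_2$ and $\phi_3$ have poles only on $\Gamma\cdot\mathbb{H}_2 = H(1/8)$ of orders $2$ and $3$ respectively, so $\psi_3 \cdot \phi_2$ (weight $5$) and $\psi_3 \cdot \phi_3$ (weight $6$) — or perhaps better $\psi_3^2 \phi_2$ and $\psi_3 \phi_3$ to clear poles completely, since $\psi_3$ vanishes to order $2$ on $H(1/8)$ — are holomorphic, and then algebraic independence of $\phi_2, \phi_3, \cE_4, \psi_3^2, \cE_8$ reduces to that of a collection of holomorphic modular forms, which can be checked either against the known structure of the ring of holomorphic Hermitian modular forms of discriminant $-8$ or by a direct Fourier-coefficient computation of a $5\times 5$ Jacobian determinant using enough Fourier--Jacobi coefficients. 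By \cite[Theorem 2.5(2)]{W} this nonvanishing Jacobian is exactly the statement of algebraic independence.

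Next I would argue that $J$ is holomorphic away from $H = \Gamma\cdot\mathbb{H}_2 = H(1/8)$: by the discussion preceding Theorem~\ref{th:j}, $J$ is meromorphic with poles at worst where one of $f_0,\dots,f_4$ has a pole, and here only $\phi_2$ and $\phi_3$ contribute poles, all supported on $H(1/8)$. By Theorem~\ref{th:j}(i), $J$ vanishes on $D(1)\setminus H$; since $4 \mid d_K = -8$, we have $D(1) = H(1) \cup H(1/4)$, and $\mathrm{div}\,\Phi_{27} = 8H(1/8) + H(1/4) + H(1)$, so $\Phi_{27}$ vanishes on $D(1)\setminus H$ and on $H = H(1/8)$ to order $8$. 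The quotient $J/\Phi_{27}$ is then a meromorphic modular form of weight $0$ holomorphic away from $H(1/8)$, hence lies in $\mathcal{M}_0^! = \mathbb{C}$ provided we check $J$ does not acquire a pole of order exceeding $8$ along $H(1/8)$; a pole-order count at the generic point of $H(1/8)$ (orders $2$ and $3$ from $\phi_2, \phi_3$ in the respective rows, against the order-$2$ zero of $\psi_3$ doubled in $\psi_3^2$) shows the pole of $J$ along $H(1/8)$ has order at most $8$, so $J/\Phi_{27}$ is indeed constant and nonzero. Finally, $J$ having only a simple zero on $D(1)\setminus H$ and all remaining zeros and poles on $H$ follows from the exact divisor of $\Phi_{27}$, so Theorem~\ref{th:j}(ii) gives $\mathcal{M}_*^! = \mathbb{C}[\phi_2, \phi_3, \cE_4, \psi_3^2, \cE_8]$.

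The main obstacle I anticipate is the bookkeeping of vanishing and pole orders of $J$ along $H(1/8)$ — one must be careful that the Jacobian, built from forms with poles of different orders in the various matrix entries, has a pole of order exactly $8$ (and not less) along $H(1/8)$ so that $J/\Phi_{27}$ is a nonzero constant rather than identically zero; the nonvanishing is precisely the algebraic independence already established, but matching the precise order requires either an explicit local expansion near the generic point of $H(1/8)$ or appeal to the fact that $\Phi_{27}$ is, up to scalar, the unique skew-symmetric form of weight $27$ with the relevant divisor. The algebraic-independence verification via Fourier--Jacobi coefficients is routine but computationally the heaviest step.
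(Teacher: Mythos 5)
Your proposal is correct and follows essentially the same route as the paper: clear the poles of $\phi_2,\phi_3$ by multiplying by $\psi_3^2$ (the paper uses the holomorphic forms $\phi_2\psi_3^2,\ \phi_3\psi_3^2,\ \cE_4,\ \psi_3^2,\ \cE_8$ and checks their Jacobian via Fourier--Jacobi expansions), then observe that $J=J(\phi_2,\phi_3,\cE_4,\psi_3^2,\cE_8)$ has weight $27$ and vanishes on $D(1)=H(1)+H(1/4)$, so $J/\Phi_{27}\in\mathcal{M}_0^!=\mathbb{C}$ and Theorem~\ref{th:j} applies. Two minor points: $\psi_3\phi_3$ still has a simple pole on $H(1/8)$ (since $\mathrm{div}\,\phi_3=-3H(1/8)+H(1/2)$ while $\psi_3$ vanishes there only to order $2$), so you need $\psi_3^2\phi_3$; and your worry about pinning down the exact pole order of $J$ along $H(1/8)$ is unnecessary, because $\mathcal{M}_0^!$ places no bound on pole orders along $H$, so $J/\Phi_{27}$ is a constant which is nonzero simply because $J\neq 0$.
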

\begin{proof} The product $\phi_2 \psi_3^2$ is the unique weight $8$ cusp form in the Maass subspace (cf. \cite{K}) that vanishes on the Heegner divisor $H(1/4)$; this is the additive lift of the vector-valued cusp form $$q^{1/4} e_{(0,0,1/2,1/2,0,0)} - 2q^{1/2} e_{(0,0,0,1/2,0,0)} + 8q^{5/8} (e_{(0,0,1/2,1/4,0,0)} + e_{(0,0,1/2,3/4,0,0)}) + ... \in S_7(\rho).$$ Using the Fourier--Jacobi expansions (with at least the first $8$ coefficients) of the holomorphic forms $\phi_2 \psi_3^2,$ $\phi_3 \psi_3^2, \cE_4, \psi_3^2, \cE_8$ we find that their Jacobian is not identically zero. It follows that $$J = J(\phi_2, \phi_3, \cE_4, \psi_3^2, \cE_8) \in \mathcal{M}_{27}^!$$ is nonzero and vanishes on the reflective divisor $D(1)=H(1)+H(1/4)$, so $J / \Phi_{27} \in \mathcal{M}_0^! = \mathbb{C}$ and the claim follows from Theorem \ref{th:j}.
\end{proof}

\begin{remark} The ring structure of holomorphic modular forms for this group was determined by Dern and Krieg in \cite{DK2}.
\end{remark}

\subsection{Discriminant $-11$}

We use the lattice $L$ with Gram matrix $\begin{psmallmatrix} 0 & 0 & 0 & 0 & 0 & 1 \\ 0 & 0 & 0 & 0 & 1 & 0 \\ 0 & 0 & 2 & 1 & 0 & 0 \\ 0 & 0 & 1 & 6 & 0 & 0 \\ 0 & 1 & 0 & 0 & 0 & 0 \\ 1 & 0 & 0 & 0 & 0 &0 \end{psmallmatrix}$. Since the discriminant is prime, we again have the Bruinier--Bundschuh isomorphism between vector-valued modular forms and the minus-space of modular forms of level $\Gamma_0(11)$ with the quadratic character. In weight $(-1)$ there are nearly-holomorphic modular forms with Fourier expansion beginning $$F_{-1}(\tau) = 2q^{-3} + 10q^{-1} + 10 - 12q^2 - 40q^6 \pm ...$$ $$G_{-1}(\tau) = 2q^{-4} - 8q^{-1} + 6 + 46q^2 - 150q^6 \pm ...$$ $$H_{-1}(\tau) = q^{-11} + 22q^{-1} + 48 + 528q^2 + 7920q^6 + ...$$
They lift to Borcherds products $\phi_5, \phi_3, \Phi_{24}$ with the properties\\
(1) $\phi_5$ is a weight $5$ cusp form with divisor $$\mathrm{div}\, \phi_5 = 5H(1/11) + H(3/11);$$
(2) $\phi_3$ is a meromorphic weight $3$ form with divisor $$\mathrm{div}\, \phi_3 = -3H(1/11) + H(4/11);$$
(3) $\Phi_{24}$ is a skew-symmetric cusp form of weight $24$ with divisor $$\mathrm{div}\, \Phi_{24} = 11 H(1/11) + H(1).$$

We will also use the singular additive lift $\phi_2$ of the weight $1$ form $$F_1(\tau) = q^{-1} - 1 + 7q^2 + 7q^6 + 19q^7 \pm ...$$ which is holomorphic except for double poles on $\Gamma \cdot \mathbb{H}_2$. Define $\mathcal{M}_*^!$ to consist of meromorphic modular forms holomorphic on $\mathbf{H}_2  \backslash (\Gamma \cdot \mathbb{H}_2)$.

\begin{theorem} $$\mathcal{M}_*^! = \mathbb{C}[\phi_2, \phi_3, \cE_4, \phi_5, \cE_6].$$
\end{theorem}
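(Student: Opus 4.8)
The plan is to follow the same template used for discriminants $-4, -7, -8$: exhibit the five generators, prove they are algebraically independent, identify their Jacobian with the skew-symmetric Borcherds product $\Phi_{24}$ up to a constant, and invoke Theorem \ref{th:j}. The weights $2,3,4,5,6$ sum with the extra $n=4$ to $2+3+4+5+6+4 = 24$, which matches $\mathrm{wt}(\Phi_{24})$, so the numerology is consistent and this is the right choice of generators. Since $\mathrm{div}\,\Phi_{24} = 11 H(1/11) + H(1)$ and $D(1) = H(1)$ here (as $11$ is odd, so $4 \nmid d_K$), the Jacobian must vanish on $H(1)\setminus H$ by part (i) of Theorem \ref{th:j}; the point will be to check it has only a simple zero there and no other zeros or poles outside $H = \Gamma\cdot\mathbb{H}_2$.

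First I would establish algebraic independence. As in the earlier cases this reduces to algebraic independence of the holomorphic forms obtained by clearing the pole: $\phi_2\phi_5$ (or better, a product that clears the double pole on $\Gamma\cdot\mathbb{H}_2$, e.g. using $\phi_5$ whose divisor contains $5H(1/11)$, noting $H(1/11)$ is the discriminant-$1/11$ divisor representing $\mathbb{H}_2$), together with $\cE_4, \phi_5, \cE_6$. Concretely I would pull back to a hyperplane where $\mathbb{H}_2$-type divisors become the Siegel diagonal and compare with the known Siegel picture, or simply compute enough Fourier--Jacobi coefficients to see the Jacobian $J(\cE_4,\phi_5,\cE_6,\phi_2\phi_5^2,\phi_3\phi_5^2)$ (or an analogous combination with the correct pole orders cleared) is nonzero; by \cite[Theorem 2.5(2)]{W} this is equivalent to algebraic independence.

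Next, form $J = J(\phi_2,\phi_3,\cE_4,\phi_5,\cE_6) \in \mathcal{M}_{24}^!$. It is nonzero by the previous step, holomorphic away from $H = \Gamma\cdot\mathbb{H}_2$, of weight $24$, and carries the determinant character, hence vanishes on $D(1)\setminus H = H(1)\setminus H$. The quotient $J/\Phi_{24}$ is then a meromorphic modular form of weight $0$ with trivial character, holomorphic away from $H$; I would argue it lies in $\mathcal{M}_0^! = \mathbb{C}$, so $J$ is a constant multiple of $\Phi_{24}$. For this I must check that $J$ has no extra zeros or poles: poles can only occur on $H$ (since each generator is holomorphic off $H$ and the product/quotient rule only propagates poles of the $f_i$), and for zeros I need that the vanishing on $H(1)$ is exactly to first order — this follows because $\Phi_{24}$ has a simple zero on $H(1)$, so $J/\Phi_{24}$ has neither a pole nor a forced zero there; once $J/\Phi_{24} \in \mathcal{M}_0^! = \mathbb{C}$ by Looijenga's result, all hypotheses of Theorem \ref{th:j}(ii) are verified and we conclude $\mathcal{M}_*^! = \mathbb{C}[\phi_2,\phi_3,\cE_4,\phi_5,\cE_6]$.

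The main obstacle is the nonvanishing of the Jacobian $J$. Unlike the lower-discriminant cases there is no published ring of holomorphic modular forms for $\mathbb{Q}(\sqrt{-11})$ to lean on, so this will have to be done by an explicit Fourier--Jacobi computation — one needs enough coefficients of each generator (the Eisenstein series via the formula in \cite{Dern}, the Borcherds products $\phi_3,\phi_5$ via their divisors, and $\phi_2$ as a singular additive lift) to evaluate the $5\times 5$ determinant to high enough order to see it is not identically zero. A secondary subtlety is making sure the pole orders are handled correctly when clearing denominators to reduce to holomorphic forms: $\phi_2$ has a double pole and $\phi_3$ a triple pole on $\Gamma\cdot\mathbb{H}_2 = H(1/11)$, while $\phi_5$ vanishes to order $5$ there, so $\phi_2\phi_5$ and $\phi_3\phi_5^2$ (or $\phi_2\phi_5^2$ and $\phi_3\phi_5^2$ for uniform weight bookkeeping) are genuinely holomorphic, and one should double-check these divisor computations before running the Jacobian test.
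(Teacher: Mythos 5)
Your proposal is correct and follows essentially the same route as the paper: clear the poles against $\phi_5$ to reduce algebraic independence to a Fourier--Jacobi computation of a holomorphic Jacobian (the paper uses $\phi_2\phi_5,\phi_3\phi_5,\cE_4,\phi_5,\cE_6$ with the first $8$ coefficients; note $\phi_3\phi_5$ already suffices since $\phi_5$ vanishes to order $5$ on $H(1/11)$), then observe $J\in\mathcal{M}_{24}^!$ vanishes on $H(1)$, so $J/\Phi_{24}\in\mathcal{M}_0^!=\mathbb{C}$ and Theorem \ref{th:j} applies. One small correction: the ring of holomorphic modular forms for $\mathbb{Q}(\sqrt{-11})$ was in fact computed in \cite{bw2}, though the paper does not rely on it here.
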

\begin{proof} By a similar argument to the previous section, $\phi_2 \phi_5$ is the unique weight $7$ cusp form in the Maass space. A direct computation using the first 8 Fourier--Jacobi coefficients shows that the Jacobian of the (holomorphic) modular forms $\phi_2 \phi_5, \phi_3 \phi_5, \cE_4, \phi_5, \cE_6$ is not identically zero, which implies that $\phi_2, \phi_3, \cE_4, \phi_5, \cE_6$ are algebraically independent. Their Jacobian $$J = J(\phi_2, \phi_3, \cE_4, \phi_5, \cE_6)$$ has weight $24$ and vanishes on $H(1)$, so $J / \Phi_{24} \in \mathcal{M}_0^! = \mathbb{C}$ and the claim follows from Theorem \ref{th:j}.
\end{proof}

\begin{remark} The ring of holomorphic modular forms was computed in \cite{bw2}.
\end{remark}

\subsection{A nonexistence theorem} 

We will show that the four cases above account for all freely generated algebras of Hermitian modular forms that are holomorphic away from $\mathbb{H}_2$. Let $K$ be an imaginary-quadratic number field of discriminant $d_K$ and let $\mathcal{M}_k^!$ be the space of meromorphic Hermitian modular forms of weight $k$ whose poles lie only on conjugates of $\mathbb{H}_2$.

\begin{lemma}\label{lem:dim1} $\mathrm{dim}\, \mathcal{M}_1^! \le 1$.
\end{lemma}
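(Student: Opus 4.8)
The plan is to bound $\dim \mathcal{M}_1^!$ by studying the possible divisors of a weight-one meromorphic modular form whose only poles lie on the orbit of the Siegel half-space $\mathbb{H}_2$. Recall that $\mathbb{H}_2$ is the Heegner divisor of discriminant $1/|d_K|$, so a form $f \in \mathcal{M}_1^!$ has $\operatorname{div} f = -m \cdot H(1/|d_K|) + (\text{effective})$ for some $m \ge 0$; in the orthogonal picture this is a weight-$1$ meromorphic modular form for $\widetilde{\mathrm{O}}^+(L)$ with poles supported on $H(1/|d_K|)$. The first step is to pin down $m$ and the effective part using the obstruction principle: the existence of such $f$ with prescribed principal part forces the existence of a nearly-holomorphic vector-valued input form for the singular additive lift, equivalently it imposes that a certain linear functional (pairing against holomorphic vector-valued modular forms of the dual weight) vanishes. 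Since the weight of $f$ is $1$, the relevant input has weight $1 + 1 - n/2 = 0$ with $n = 4$, and weight-$0$ vector-valued modular forms with the Weil representation are very restricted, essentially multiples of a single Eisenstein-type vector when they exist at all.

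Next I would argue as follows. Suppose $f_1, f_2 \in \mathcal{M}_1^!$ are linearly independent. By subtracting a scalar multiple we may assume $f_2$ vanishes to higher order than $f_1$ along some component, or more cleanly, consider the quotient $f_2/f_1$, which is a meromorphic modular function (weight $0$) for $\widetilde{\mathrm{O}}^+(L)$ whose divisor is supported on $H(1/|d_K|)$ and its $\widetilde{\mathrm{O}}^+(L)$-translates, together with the zero divisor of $f_1$. Since $\mathcal{M}_0^! = \mathbb{C}$ by Looijenga's theorem (stated in the excerpt), any such modular function that is actually holomorphic away from $H$ and has nonnegative "order" everywhere would be constant; the obstruction is that $f_2/f_1$ might have a genuine pole along the zero divisor of $f_1$ outside $H$. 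So the crux is to show that $f_1$ — a weight-one form — cannot have any zeros outside $H$, or rather that its zero divisor is forced to be "small enough" that $f_2/f_1 \in \mathcal{M}_0^! = \mathbb{C}$. A clean way: if $f \in \mathcal{M}_1^!$ is nonzero, then $f$ has a pole of order exactly $m \ge 1$ on $H(1/|d_K|)$ (it cannot be holomorphic, since $\mathcal{M}_1 = 0$ — there are no holomorphic Hermitian cusp or modular forms of weight $1$, and weight-$1$ holomorphic forms vanish by a standard singular-weight / Koecher argument for $n=4$), and the quasi-pullback to a generic hyperplane component of $H$, after clearing the pole, must be a holomorphic modular form of weight $1 - m \le 0$ on a signature $(3,2)$ lattice, forcing $1 - m = 0$, i.e. $m = 1$ and the pullback is a nonzero constant. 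This rigidifies the leading Fourier–Jacobi / Taylor behaviour of $f$ along $H$ up to scalar, so any two elements of $\mathcal{M}_1^!$ agree up to scalar after subtracting the difference of a suitable constant multiple — the difference lies in $\mathcal{M}_1^!$ and is holomorphic along $H$, hence lies in $\mathcal{M}_1 = 0$.

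Concretely, the steps in order: (1) show $\mathcal{M}_1 = \{0\}$, i.e. there is no nonzero holomorphic Hermitian modular form of weight $1$ for $\Gamma_K$ — this follows because weight $1 < n/2 = 2$ is below the singular weight, so such a form would be a constant, which is not modular of positive weight, hence zero; alternatively cite that the minimal weight of a holomorphic modular form exceeds $1$. (2) For nonzero $f \in \mathcal{M}_1^!$, analyze $\operatorname{div} f = -m H(1/|d_K|) + E$ with $E$ effective and $m \ge 0$; the vanishing of $\mathcal{M}_1$ forces $m \ge 1$. (3) Take the quasi-pullback to a component $\lambda^\perp$ of $H(1/|d_K|)$: since in this arrangement distinct hyperplanes have disjoint intersection with $\mathcal{D}(L)$ (the Remark after Looijenga's theorem, applicable here since the discriminant bound $1/|d_K|$ makes double intersections empty for $d_K \ne -3, -4$, and separately checked for $d_K=-4$), the quasi-pullback $\mathrm{Q}f$ is a holomorphic modular form of weight $1 - m$ on the rank-$5$ lattice $\lambda^\perp \cap L'$; weight $1 - m \le 0$ forces $m = 1$ and $\mathrm{Q}f$ a nonzero constant $c_f \ne 0$. (4) Given two nonzero $f_1, f_2 \in \mathcal{M}_1^!$, the combination $c_{f_2} f_1 - c_{f_1} f_2$ lies in $\mathcal{M}_1^!$ and has $\mathrm{Q}(\cdot) = 0$ on the generic hyperplane component, hence is holomorphic there (the simple pole cancels); since it is then holomorphic away from the rest of $H$, which has codimension $\ge 2$ in its union of components... — here one must be slightly careful, but in fact a weight-$1$ meromorphic form holomorphic along generic points of every component of $H$ is holomorphic everywhere on $\mathcal{D}(L)$ by normality / Hartogs, hence lies in $\mathcal{M}_1 = 0$; therefore $f_2 = (c_{f_2}/c_{f_1}) f_1$. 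The main obstacle I anticipate is step (3)–(4): justifying rigorously that controlling the single leading quasi-pullback coefficient along one generic hyperplane component suffices to conclude the form is determined up to scalar — one needs that $H(1/|d_K|)$ is irreducible, or that $\widetilde{\mathrm{O}}^+(L)$ (or $\Gamma_K$) acts transitively on its components, so that the single constant $c_f$ really controls the pole along all of $H$; this is exactly the irreducibility statement for the relevant Heegner divisor, which the authors have been tracking case by case (e.g. "$H(1)$ is irreducible because $L$ is maximal"), and the same maximality-type argument should give what is needed here.
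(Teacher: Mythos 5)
Your steps (2)--(4) are essentially the paper's own argument, just written out in more detail: a nonzero $f \in \mathcal{M}_1^!$ has at worst a simple pole along $\mathbb{H}_2$ with constant residue (Koecher's principle applied to the quasi-pullback to the signature $(3,2)$ hyperplane, using that distinct hyperplanes of this arrangement do not meet inside $\mathcal{D}(L)$), so for two linearly independent forms a suitable linear combination kills the pole and is holomorphic everywhere; one is then reduced to showing that the space of \emph{holomorphic} weight-one Hermitian modular forms vanishes. Your worry about irreducibility in step (4) is not an issue, since $H$ is by definition the single orbit $\Gamma \cdot \mathbb{H}_2$ and the constant residue propagates to all conjugates by modularity.

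The genuine gap is in step (1), which is the real content of the lemma. You assert that weight $1$ lies \emph{below} the singular weight, quoting $n/2 = 2$; but for modular forms on $\mathrm{O}(4,2)$ the singular weight is $n/2 - 1 = 1$ (equivalently, Hermitian modular forms of degree two have singular weight $1$). A holomorphic weight-one form therefore sits \emph{exactly at} the singular weight: the general theory only forces its Fourier coefficients to be supported on singular matrices $B$, and nonzero singular-weight forms do exist on many orthogonal groups, so no soft argument of this kind can conclude vanishing. An additional input is needed, and this is where the paper does real work: writing the Fourier--Jacobi expansion $h = \sum_n h_n(\tau, z_1, z_2) e^{2\pi i n w}$, each restriction $h_n(\tau, \lambda z, \overline{\lambda} z)$ with $\lambda \in \mathcal{O}_K$ is a Jacobi form of weight one and index $N_{K/\mathbb{Q}}(\lambda)$, hence vanishes by Skoruppa's theorem on the nonexistence of weight-one Jacobi forms; since these lines are dense in $\mathbb{C}^2$, each $h_n$ vanishes identically. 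Your fallback option, to ``cite that the minimal weight of a holomorphic modular form exceeds $1$,'' is just assuming the conclusion. As written, the proposal does not establish the lemma.
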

\begin{proof} The Fourier--Jacobi expansion of any holomorphic Hermitian modular form $h$ takes the form $$h \Big( \begin{psmallmatrix} \tau & z_1 \\ z_2 & w \end{psmallmatrix} \Big) = \sum_{n=0}^{\infty} h_n(\tau, z_1, z_2) e^{2\pi i n w}.$$ In particular, if $h$ has weight one, then for any $\lambda \in \mathcal{O}_K$, the form $h_n(\tau, \lambda z, \overline{\lambda} z)$ is a Jacobi form of weight one (and index $N_{K/\mathbb{Q}} \lambda$) and vanishes identically by a theorem of Skoruppa (cf. \cite{EZ}, Theorem 5.7). Therefore the zeros of each $h_n$ are dense in $\mathbb{H} \times \mathbb{C}^2$ so all $h_n$ vanish identically. This shows that there are no holomorphic Hermitian modular forms of weight one (without character) for any discriminant.

Now suppose $f, g \in \mathcal{M}_1^!$ are linearly independent. Since a weight one form in $\mathcal{M}_1^!$ can have at worst a simple pole on $\mathbb{H}_2$ with constant residue, some linear combination of $f$ and $g$ must be holomorphic on $\mathbb{H}_2$ and therefore holomorphic everywhere. This cannot happen by the previous paragraph.
\end{proof}

\begin{theorem}\label{th:classify} Suppose $|d_K| > 11$. Then $\mathcal{M}_*^!$ cannot be generated by only five modular forms.
\end{theorem}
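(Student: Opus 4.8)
The plan is to derive a contradiction from dimension counts, using the fact that a free algebra on five generators of weights $k_1,\dots,k_5$ has a rigidly determined Hilbert series $\prod_{i=1}^5 (1-t^{k_i})^{-1}$, together with the constraint from Lemma~\ref{lem:dim1} that $\dim \mathcal{M}_1^! \le 1$ and the observation (from the Remark after Theorem~\ref{th:j}, or rather its analogue here) that the weights must include enough small values to generate $\mathcal{M}_2^!$, $\mathcal{M}_3^!$, and so on. First I would record what is forced about the low-weight spaces: since the only pole allowed is a simple pole on $\mathbb{H}_2$ with constant residue in weight one, $\dim\mathcal{M}_1^!\le 1$; and one can compute $\dim \mathcal{M}_2^!$, $\dim\mathcal{M}_3^!$, $\dim\mathcal{M}_4^!$ directly. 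The key point is that these dimensions do \emph{not} grow fast enough to be generated by five forms once $|d_K|$ is large: a meromorphic form with a pole on $\mathbb{H}_2$ of order $m$ quasi-pulls back (using the Remark after the definition of hyperplane arrangements, since the arrangement here consists of a single Heegner divisor whose self-intersections are empty for $d_K\notin\{-3,-4\}$) to a holomorphic Siegel modular form of weight $k-m$ on the restricted group, and the order $m$ is bounded by the structure of the relevant vector-valued input spaces, which shrink as $|d_K|$ grows because the discriminant form $L'/L$ becomes larger and the obstruction space for the relevant nearly-holomorphic forms grows.

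The main steps, in order, are: (1) Show that $\mathcal{M}_*^!$ is generated in low weight — more precisely, bound the pole order on $\mathbb{H}_2$ of any generator. Here the crucial input is that the quasi-pullback to $\mathbb{H}_2$ (the Siegel half-space, a rational quadratic divisor of discriminant $1/|d_K|$) lands in a space of holomorphic Siegel modular forms of weight $k-m$, and by Koecher's principle on the divisor, $k\ge m$; combined with the fact that the \emph{minimal} weight of a form in $\mathcal{M}_*^!$ with a genuine pole on $\mathbb{H}_2$ is at least $2$ when $|d_K|>11$, since the weight-one obstruction to producing a singular additive lift with pole on $\mathbb{H}_2$ is nonempty only for small $|d_K|$. (2) Argue that the Eisenstein series $\cE_4, \cE_6, \dots$ and the weight-two meromorphic form $\phi_2$ (if it exists) account for the generators of small weight, but that the number of algebraically independent such forms is at most four plus one, and that matching the Hilbert series forces the generator weights to satisfy a numerical system — e.g., the product of the weights must equal a quantity computed from the index of $\widetilde{\mathrm{O}}^+(L)$ via the Jacobian identity of Theorem~\ref{th:j}(i), namely $\mathrm{wt}(J)=4+\sum k_i$ and $J$ must be (up to scalar) the Borcherds product $\Phi$ cutting out $D(1)$ plus the pole divisor, whose weight is explicitly computable from the vector-valued input. (3) Show this numerical system has no solution for $|d_K|>11$: the weight of $\Phi$ (half the constant term of the relevant input form) grows roughly like $|d_K|$, whereas $4+\sum k_i$ with $k_i$ bounded by the dimension estimates in step (1) cannot keep pace.

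I expect the main obstacle to be step (1) — rigorously bounding the pole orders, equivalently bounding the weights of the generators. One needs to rule out, for large $|d_K|$, the existence of meromorphic modular forms of small weight with high-order poles on $\mathbb{H}_2$; the clean way is to observe that such a form would quasi-pull back to a holomorphic Siegel modular form of very low (possibly negative) weight, which forces $k\ge m$, and then separately to control the minimal weight at which a nonconstant element of $\mathcal{M}_*^!$ first appears, using that $\dim M_{1+k-n/2}^!(\rho)$ of the relevant \emph{negative-weight} vector-valued forms is governed by the Borcherds obstruction principle and the genus of $L'/L$. A secondary subtlety is that one must handle the possibility that some generators are holomorphic (Eisenstein-type) and some are genuinely meromorphic; the Hilbert-series bookkeeping has to allow both, but the total count of five, together with algebraic independence forcing the Jacobian to be the prescribed Borcherds product of computable weight, pins everything down. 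Once the weight of that Borcherds product is shown to exceed $4+\sum k_i$ for every admissible tuple $(k_1,\dots,k_5)$ when $|d_K|>11$, the contradiction is immediate.
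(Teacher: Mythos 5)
There is a genuine gap, and it sits exactly where you predicted the main obstacle would be: in turning ``the Jacobian is a Borcherds product of computable weight'' into an actual numerical contradiction. The paper's proof does begin the way you do --- $J = J(f_1,\dots,f_5)$ has a simple zero on $D(1)$, a zero or pole of some multiplicity $m$ on $\mathbb{H}_2$, and no other zeros or poles, hence is the Borcherds lift of some $F \in M^!_{-1}(\rho)$ by Bruinier's converse theorem \cite{Br} --- but the two identities that make this quantitative are absent from your outline. First, the theta-contraction of $F$ to $\mathbb{Z}\subseteq\mathcal{O}_K$ (equivalently, the quasi-pullback of $J$ to $\mathbb{H}_2$) is forced by the Looijenga condition to have principal part $q^{-1}e_0 + O(q^0)$, which determines it uniquely and yields $\mathrm{wt}(J) + m = 35$. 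Second, pairing $F$ against a weight-three form for the dual Weil representation (a Serre derivative of the theta series of $\mathcal{O}_K^{\#}$) and using that the constant term of the resulting weight-two nearly-holomorphic form vanishes gives $m = |d_K|$. Together these force $\mathrm{wt}(J) = 35 - |d_K|$: the weight of the relevant Borcherds product \emph{decreases} as $|d_K|$ grows, which is the opposite of what you assert in step (3). The contradiction is therefore not that $4+\sum_i k_i$ ``cannot keep pace'' with a growing weight, but that $35 - |d_K| = 4 + \sum_i k_i \ge 4+1+2+2+2+2 = 13$ (using Lemma~\ref{lem:dim1} to allow at most one weight-one generator), whence $|d_K| \le 22$; the finitely many remaining discriminants with $11 < |d_K| \le 22$ are then eliminated by Borcherds' obstruction theorem \cite{B2}, a case check your proposal does not anticipate.

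Your steps (1)--(2) --- bounding pole orders of the generators from above, computing $\dim\mathcal{M}_k^!$ for small $k$, and matching Hilbert series --- are not used in the paper and would be difficult to carry out uniformly in $d_K$: the argument never bounds the generator weights from above, only from below, and never computes any $\dim\mathcal{M}_k^!$ beyond Lemma~\ref{lem:dim1}. To rescue your route you would need to replace the phrase ``the weight of $\Phi$ is explicitly computable from the vector-valued input'' with the two identities above (or an equivalent exact determination of $\mathrm{wt}(J)$ and $m$); that is where the real content of the proof lives.
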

\begin{proof} Suppose $\mathcal{M}_*^!$ is generated by five modular forms $f_1,...,f_5$ of weights $k_1,...,k_5$. Since the intersection of any two hyperplanes in the arrangement is disjoint from $\mathbb{H}_2$, the boundary components of the Looijenga compactification have dimension at most one. Using an argument analogous to \cite[Theorem 3.5 (2)]{W}, we find that the Jacobian $$J = J(f_1,...,f_5)$$ has only a simple zero on the Heegner divisor 
$$D(1)=\bigoplus_{\lambda\in L, Q(\lambda)=1} \mathcal{D}_\lambda(L),$$ a (not necessarily simple) zero or pole on the Siegel upper half-space, and no zeros or poles otherwise. By the Bruinier converse theorem \cite{Br} it is the Borcherds lift of a vector-valued modular form $F \in M_{-1}^!(\rho)$. We can fix $\rho$ to be the Weil representation of the lattice $\mathcal{O}_K$, as this yields the same Weil representation as $2U \oplus \mathcal{O}_K$. The form $F$ then has principal part $$F(\tau) = q^{-1} e_{0} + m q^{-1/|d_K|} (e_v + e_{-v}) + 2 \cdot \mathrm{wt}(J) e_0 + O(q^{1/|d_K|})$$ for some $m \in \mathbb{Z}$, (the multiplicity of $\mathbb{H}_2$ in the divisor of $J$), where $v = \frac{i}{\sqrt{|d_K|}} \in \mathcal{O}_K^{\#} / \mathcal{O}_K$.

Consider the theta-contraction $\vartheta F$ to the sublattice $\mathbb{Z} \subseteq \mathcal{O}_K$, which corresponds to the quasi-pullback of $J$ to $\mathbb{H}_2$. The Looijenga condition implies that $\vartheta F \in M_{-1/2}^!(\rho_{\mathbb{Z}})$ is a modular form with principal part $q^{-1} e_{0} + O(q^0)$ at infinity, which determines it uniquely: $$\vartheta F(\tau) = (q^{-1} + 70 + 131976q + ...) e_{0} + (32384q^{3/4} + ...) e_{1/2}.$$ Since the weight of $J$ increases or decreases by the multiplicity $m$ under the quasi-pullback to $\mathbb{H}_2$, we find $$35 = \mathrm{wt}(J) + m.$$

The dual representation $\rho^*$ admits a modular form of weight three, which can be constructed as the Serre derivative of the usual theta series: $$G(\tau) := \frac{1}{2\pi i} \Theta'(\tau) - \frac{1}{12} E_2(\tau) \Theta(\tau),$$ where $\Theta(\tau) = \sum_{\lambda \in \mathcal{O}_K^{\#}} q^{N_{K/\mathbb{Q}}(\lambda)} e_{\lambda}$. In particular the Fourier expansion of $G$ has the form $$G(\tau) = -\frac{1}{12} e_0 + \Big( \frac{1}{|d_K|} - \frac{1}{12} \Big) q^{1 / |d_K|} (e_v + e_{-v}) \pm ... \pm \frac{23}{6} q e_0 + ...$$ for a unique pair of cosets $\pm v \in \mathcal{O}_K^{\#} / \mathcal{O}_K$. If we write $$F(\tau) = \sum_{x \in \mathcal{O}_K^{\#} / \mathcal{O}_K} f_x(\tau) e_x \; \text{and} \; G(\tau) = \sum_{x \in \mathcal{O}_K^{\#} / \mathcal{O}_K} g_x(\tau) e_x$$ then $\sum_{x \in \mathcal{O}_K^{\#}/\mathcal{O}_K} f_x(\tau) g_x(\tau)$ is a nearly-holomorphic modular form of weight two and therefore has constant term zero. This yields the identity $$\frac{23}{6} - \frac{70 - 2m}{12} + 2m \Big( \frac{1}{|d_K|} - \frac{1}{12} \Big) = 0$$ which implies $m = |d_K|$. This relation can also be proved using the approach to classify reflective modular forms in \cite{W1}. 

Since $k_1,...,k_5 \ge 1$ and by Lemma \ref{lem:dim1} at most one of the generators can have weight one, we have $$35 - |d_K| = \mathrm{wt}(J) = k_1+k_2+k_3+k_4+k_5+4 \ge 1+2+2+2+2+4 = 13$$ and therefore $|d_K| \le 22$. By applying Borcherds' obstruction theorem \cite{B2} to the lattices with $|d_K| \le 22$ we see that the desired vector-valued form $F$ does not exist unless $|d_K| \le 11$.
\end{proof}

\section{Rings of modular forms with multiple poles}

We will consider rings of meromorphic Hermitian modular forms with poles on Heegner divisors that are not necessarily conjugate to the Siegel upper half-space. For the discriminants $|d_K| \in \{15, 19, 20, 24\}$ we find additional hyperplane arrangements $H$ such that the rings of meromorphic modular forms that are holomorphic away from $H$ are polynomial algebras without relations.

\subsection{Discriminant $-15$}

The lattice $L$ has Gram matrix $\begin{psmallmatrix} 0 & 0 & 0 & 0 & 0 & 1 \\ 0 & 0 & 0 & 0 & 1 & 0 \\ 0 & 0 & 2 & 1 & 0 & 0 \\ 0 & 0 & 1 & 8 & 0 & 0 \\ 0 & 1 & 0 & 0 & 0 & 0 \\ 1 & 0 & 0 & 0 & 0 &0 \end{psmallmatrix}$. There are two $\pm$-pairs of orbits of norm $1/15$ vectors under the discriminant kernel, so the discriminant $1/15$ Heegner divisor splits into two irreducible components: $$H(1/15) = H(1/15, (0, 0, 1/15, -2/15, 0, 0)) + H(1/15, (0, 0, 4/15, -8/15, 0, 0)).$$ There are weight $(-1)$ nearly-holomorphic vector-valued modular forms with Fourier expansions beginning \begin{align*} F_{-1}(\tau) &= q^{-4/15} (e_{(0,0,2/15,-4/15,0,0)} + e_{(0,0,-2/15,4/15,0,0)}) \\ &+ 7 q^{-1/15} (e_{(0,0,4/15,-8/15,0,0)} + e_{(0,0,-4/15,8/15,0,0)}) \\ &- 4q^{-1/15} (e_{(0,0,1/15,-2/15,0,0)} + e_{(0,0,-1/15,2/15,0,0)}) \\ &+ 6e_{(0,0,0,0,0,0)} + ... \end{align*}
\begin{align*} G_{-1}(\tau) &= q^{-4/15} (e_{(0,0,-8/15, 1/15,0,0)} + e_{(0,0,8/15,-1/15,0,0)}) \\ &+ 7q^{-1/15}(e_{(0,0,1/15,-2/15,0,0)} + e_{(0,0,-1/15,2/15,0,0)}) \\ & - 4q^{-1/15} (e_{(0,0,4/15,-8/15,0,0)} + e_{(0,0,-4/15,8/15,0,0)}) \\ &+ 6e_{(0,0,0,0,0,0)} + ... \end{align*}
\begin{align*} H_{-1}(\tau) &= q^{-1} e_{(0,0,0,0,0,0)} \\ &+15q^{-1/15}(e_{(0,0,1/15,-2/15,0,0)} + e_{(0,0,-1/15,2/15,0,0)} + e_{(0,0,4/15,-8/15,0,0)} + e_{(0,0,-4/15,8/15,0,0)}) \\ &+ 40 e_{(0,0,0,0,0,0)} + ... \end{align*} that lift to meromorphic Borcherds products, respectively labelled $\phi_3, \psi_3, \Phi_{20}$ of weights $3, 3, 20$. The form $\Phi_{20}$ is a skew-symmetric cusp form and the forms $\phi_3, \psi_3$ have triple poles along one of the two components of $H(1/15)$ and are holomorphic elsewhere.

We additionally need the following additive lifts with singularities: the form $\phi_2$ which is the additive lift of the weight-one nearly holomorphic form $$F_1(\tau) = q^{-1/15} (e_{(0,0,1/15,-2/15,0,0)} + e_{(0,0,-1/15,2/15,0,0)} + e_{(0,0,4/15,-8/15,0,0)} + e_{(0,0,-4/15,8/15,0,0)}) - 2 e_{(0,0,0,0)} + ...$$ and the forms $\phi_4$, $\psi_4$ coming from the weight-three nearly holomorphic forms $$F_3(\tau) = q^{-1/15} (e_{(0,0,-1/15,2/15,0,0)} + e_{(0,0,1/15,-2/15,0,0)}) - 5q^{1/3} (e_{(0,0,1/3,1/3,0,0)} + e_{(0,0,2/3,2/3,0,0)}) + ...$$
$$G_3(\tau) = q^{-1/15} (e_{(0,0,-4/15,8/15,0,0)} + e_{(0,0,4/15,-8/15,0,0)}) - 5q^{1/3} (e_{(0,0,1/3,1/3,0,0)} + e_{(0,0,2/3,2/3,0,0)}) + ...$$

The ring of meromorphic modular forms $\mathcal{M}_*^!$ consists of modular forms which are holomorphic away from the divisor $$H(1/15) = H(1/15, (0,0,1/15,-2/15,0,0)) + H(1/15, (0,-1,4/15,-8/15,1,0)).$$

\begin{theorem} $$\mathcal{M}_*^! = \mathbb{C}[\phi_2, \phi_3, \psi_3, \phi_4, \psi_4].$$
\end{theorem}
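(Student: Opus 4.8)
The plan is to apply Theorem \ref{th:j} with the five generators $\phi_2, \phi_3, \psi_3, \phi_4, \psi_4$, exactly as in the discriminant $-4, -7, -8, -11$ cases, so the argument has three moving parts: algebraic independence, the weight count together with the identification of the Jacobian, and the verification that the Jacobian's divisor is of the shape demanded by Theorem \ref{th:j}(ii).

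First I would establish algebraic independence of $\phi_2, \phi_3, \psi_3, \phi_4, \psi_4$. Since $\phi_3, \psi_3$ have poles along (distinct components of) $H(1/15)$ and $\phi_4, \psi_4$ have poles there too, I would clear denominators: multiplying by a suitable power of a holomorphic Borcherds product vanishing on $H(1/15)$ (for instance a product of appropriate powers of the holomorphic forms available in this lattice, or simply the relevant $\phi_3\psi_3$-type combination) turns the five meromorphic forms into holomorphic Hermitian modular forms, and algebraic independence of the original five is equivalent to that of the cleared forms. The latter can be checked by a direct Fourier--Jacobi computation — comparing enough Fourier--Jacobi coefficients (the analogous cases used the first $8$) to see that the Jacobian of the holomorphic forms is not identically zero — or deduced from the known ring structure of holomorphic modular forms for this group. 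By \cite[Theorem 2.5 (2)]{W} nonvanishing of the Jacobian $J = J(\phi_2,\phi_3,\psi_3,\phi_4,\psi_4)$ is then automatic.

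Next comes the weight bookkeeping: $J$ has weight $\mathrm{wt}(J) = 2+3+3+4+4+4 = 20$, it is holomorphic away from $H = H(1/15)$, and by Theorem \ref{th:j}(i) it vanishes on $D(1)\setminus H$. Since $|d_K| = 15$ is not divisible by $4$, Theorem \ref{th:j}(i) gives $D(1) = H(1)$, and $H(1)$ is irreducible. Now $\Phi_{20}$ is the skew-symmetric Borcherds product of weight $20$ with divisor $H(1)$ (up to whatever multiplicity the input form $H_{-1}$ produces along $H(1/15)$ — I must check that $H_{-1}$ is chosen so that $\Phi_{20}$ has exactly $\mathrm{div}\,\Phi_{20} = 15\,H(1/15) + H(1)$ or an equivalent statement, i.e. its only zeros off $H$ are a simple zero on $H(1)$). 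Then $J/\Phi_{20}$ is a meromorphic modular form of weight $0$, holomorphic away from $H$, hence lies in $\mathcal{M}_0^! = \mathbb{C}$ by Looijenga's theorem. Thus $J$ is (a constant times) $\Phi_{20}$; in particular $J$ has only a simple zero on $D(1)\setminus H$ and all its other zeros and poles lie in $H$. Applying Theorem \ref{th:j}(ii) yields $\mathcal{M}_*^! = \mathbb{C}[\phi_2,\phi_3,\psi_3,\phi_4,\psi_4]$.

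The main obstacle I anticipate is the verification that $J$ and $\Phi_{20}$ really have the same divisor away from $H$ — equivalently, that $J$ has no spurious zero on any reflective or other divisor outside $H(1)$ and outside the pole locus. The clean way around this is precisely the quotient argument: once $\mathrm{wt}(J) = \mathrm{wt}(\Phi_{20}) = 20$ and both vanish on the irreducible $H(1)$, the quotient $J/\Phi_{20}$ is an element of $\mathcal{M}_0^! = \mathbb{C}$, so no separate divisor analysis of $J$ is needed — this is the same trick that makes the four earlier cases work, and the only genuinely case-specific input is confirming that the weight of $\Phi_{20}$ matches $\mathrm{wt}(J)$ and that the Fourier--Jacobi computation for algebraic independence indeed succeeds. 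A secondary point to be careful about is that $\phi_4, \psi_4$ are additive (Gritsenko-type) lifts with poles of order exactly three along a single component of $H(1/15)$ and no poles elsewhere, so that all five generators genuinely lie in $\mathcal{M}_*^!$ for the arrangement $H = H(1/15)$; this should follow from Borcherds' description of the singular additive lift applied to the stated input forms $F_1, F_3, G_3$.
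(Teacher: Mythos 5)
Your overall strategy coincides with the paper's: compute $\mathrm{wt}(J)=20$, observe that $J$ vanishes on $D(1)\setminus H=H(1)$ (correctly using $4\nmid 15$), conclude $J/\Phi_{20}\in\mathcal{M}_0^!=\mathbb{C}$, and invoke Theorem \ref{th:j}(ii); that part is exactly what the paper does. Where you genuinely diverge is the algebraic-independence step. The paper explicitly avoids any direct Jacobian computation here: it restricts the five forms along a chain of sublattices $L\supset u_1^{\perp}\supset u_1^{\perp}\cap u_2^{\perp}\supset u_1^{\perp}\cap u_2^{\perp}\cap u_3^{\perp}$ (computed via theta-contraction), and reads off independence from the successive stages at which $\phi_3$, $\psi_3$ and $\phi_4-\psi_4$ die, ending in level-$\Gamma_0(2)$ elliptic modular forms. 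Your route --- clear denominators and check nonvanishing of a Fourier--Jacobi expansion of the Jacobian of the resulting holomorphic forms --- is the method the paper uses for discriminants $-7$, $-8$, $-11$, and it would work here too, but it is computationally heavier (which is presumably why the authors switched methods), whereas the restriction chain buys a computation-free certificate of independence. Two points in your write-up need repair. First, in the discriminant $-7$ case the clearing multiplier $\phi_7$ is itself one of the five forms, so the generated fields are literally equal; here none of the generators is holomorphic, so you must choose the multiplier inside $\mathbb{C}[\phi_2,\phi_3,\psi_3,\phi_4,\psi_4]$ (e.g.\ a power of $\phi_3\psi_3$, which is holomorphic since each factor's zeros on the opposite component of $H(1/15)$ outweigh the other's poles) --- otherwise ``independence of the cleared forms implies independence of the originals'' can fail (compare $x,x^2$ versus $xy,x^2y$). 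Second, $\phi_4$ and $\psi_4$ are singular additive lifts of weight $4$, so their poles have order $4$, not $3$; this does not affect membership in $\mathcal{M}_*^!$, which depends only on the support of the polar divisor.
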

\begin{proof} We will outline an argument to show that these meromorphic forms are algebraically independent that avoids computing any Jacobians directly. Consider the succesive restrictions to subgrassmannians in the following chain of signature $(n, 2)$ sublattices: $$L \rightarrow (\mathbb{Z}v_1 + \mathbb{Z} v_2 + \mathbb{Z}v_3 + \mathbb{Z}v_4 + \mathbb{Z}v_5 ) \rightarrow (\mathbb{Z}v_1 + \mathbb{Z}v_2 + \mathbb{Z}v_3 + \mathbb{Z} v_4) \rightarrow (\mathbb{Z} v_1 + \mathbb{Z}v_2 + \mathbb{Z}v_3)$$ where $$v_1 = (1,0,0,0,0,0), \; v_2 = (0,2,0,-1,-3,0), \; v_3 = (0,0,0,0,0,1), \; v_4=(0,0,0,0,1,0), \; v_5 = (0,0,1,0,0,0).$$ The lattices above are constructed as the intersections $u_1^{\perp}$, $u_1^{\perp} \cap u_2^{\perp}$ and $u_1^{\perp} \cap u_2^{\perp} \cap u_3^{\perp}$ where $$u_1 = (0,0,2/15,-4/15,-1,0), \; u_2 = (0,0,8/15,-1/15,0,0), \; u_3 = (0,1,1,0,2,0) \in L'.$$
The rank three lattice in the end has Gram matrix $\begin{psmallmatrix} 0 & 0 & 1 \\ 0 & -4 & 0 \\ 1 & 0 & 0 \end{psmallmatrix}$ and the modular forms on its orthogonal group are simply elliptic modular forms of level $\Gamma_0(2)$ (and double the starting weight). The images of the generators under these restriction maps (denoted $P$) are computed using theta-contraction as suggested in section 2.1:
\begin{align*} 
&\phi_2& &\mapsto& &P\phi_2& &\mapsto& &P^2 \phi_2& &\mapsto& &1+48q+624q^2+1344q^3 + ... \\
&\phi_3& &\mapsto& &0&  &\mapsto& &0& &\mapsto& &0 \\
&\psi_3& &\mapsto&  &P\psi_3& &\mapsto& &0& &\mapsto& &0 \\
&\phi_4& &\mapsto& &P\phi_4& &\mapsto& &P^2 \phi_4& &\mapsto& &q - 8q^2 + 12q^3 \pm ...\\
&\psi_4& &\mapsto& &P\cE_4& &\mapsto& &P^2 \cE_4& &\mapsto& &q - 8q^2 + 12q^3 \pm... 
\end{align*}
In particular one finds $P\phi_3 = 0$; $P \psi_3 \ne 0$ but $P^2 \psi_3 = 0$; and one also finds $P^2 (\phi_4 - \psi_4) \ne 0$ but $P^3(\phi_4 - \psi_4) = 0$. (The vanishing of the Borcherds products is clear from their divisors.) This implies successively that the sets $\{\phi_2, \phi_4\}$, $\{\phi_2, \phi_4, \psi_4\}$, $\{\phi_2, \phi_4, \psi_4, \psi_3\}$ and finally $\{\phi_2, \phi_4, \psi_4, \psi_3, \phi_3\}$ are algebraically independent.

Similarly to the previous cases, we find that the Jacobian of the five forms is a nonzero multiple of $\Phi_{20}$ and therefore that these forms are generators of $\mathcal{M}_*^!$.
\end{proof}

\begin{remark} The \emph{maximal discrete extension} of the Hermitian modular group \cite{KRW} contains Atkin--Lehner involutions which swap the two pairs of Heegner divisors of discriminant $1/15$, and in particular swap the pairs $\{\phi_3, \psi_3\}$ and $\{\phi_4, \psi_4\}$.
\end{remark}

\subsection{Discriminant $-19$}
In this case $L$ is the lattice with Gram matrix $\begin{psmallmatrix} 0 & 0 & 0 & 0 & 0 & 1 \\ 0 & 0 & 0 & 0 & 1 & 0 \\ 0 & 0 & 2 & 1 & 0 & 0 \\ 0 & 0 & 1 & 10 & 0 & 0 \\ 0 & 1 & 0 & 0 & 0 & 0 \\ 1 & 0 & 0 & 0 & 0 & 0 \end{psmallmatrix}$.
We will again construct modular forms using the Bruinier--Bundschuh isomorphism (cf. section 3.2) from nearly-holomorphic modular forms of level $\Gamma_0(19)$ and quadratic character $\chi$ lying in the minus-space. There are forms of weight $(-1)$ with Fourier expansions beginning $$F_{-1}(\tau) = 2q^{-6} + 2q^{-4} - 8q^{-1} + 6 + 12q^2 \pm ...$$
$$G_{-1}(\tau) = 2q^{-5} + 2q^{-4} + 10q^{-1} + 8 - 6q^2 \pm ...$$
$$H_{-1}(\tau) = 2q^{-7} + 10q^{-1} + 10 + 20q^2 \pm ...$$
$$J_{-1}(\tau) = q^{-19} + 2q^{-4} + 30q^{-1} + 38 + 198q^2 + ...$$
which lift to Borcherds products $\phi_3, \phi_4, \phi_5, \Phi_{19}$. The forms $\phi_4, \phi_5, \Phi_{19}$ are cusp forms of weights $4, 5, 19$ with $\Phi_{19}$ skew-symmetric, and $\phi_3$ is meromorphic with triple poles on $H(1/19)$.

We will be interested in the ring of meromorphic modular forms $\mathcal{M}_*^!$ that are holomorphic away from $$D(4/19) = H(1/19) \cup H(4/19).$$ Besides the Borcherds products above, we also need the singular additive theta lift $\phi_1$ of the weight $0$ (vector-valued) modular form $$F_0(\tau) = q^{-4/19} (e_{2v} - e_{-2v}) - 2q^{-1/19} (e_v - e_{-v}) + O(q^{2/19})$$ where $v \in L'$ can be any vector of norm $1/19$, as well as the singular lift $\phi_2$ of the modular form $$F_1(\tau) = 2q^{-1} - 2 + 6q^2 + 10q^3 \pm ... \in M_1^{!, -}(\Gamma_0(19), \chi).$$ Let $\mathcal{M}_*^!$ be the ring of meromorphic modular forms that are holomorphic away from the divisor $$D(4/19) = H(1/19) + H(4/19).$$

\begin{theorem} $$\mathcal{M}_*^! = \mathbb{C}[\phi_1, \phi_2, \phi_3, \phi_4, \phi_5].$$
\end{theorem}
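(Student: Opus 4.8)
The plan is to mirror the structure of the discriminant $-7,-8,-11,-15$ proofs: establish algebraic independence of $\phi_1,\phi_2,\phi_3,\phi_4,\phi_5$, identify the Jacobian $J = J(\phi_1,\phi_2,\phi_3,\phi_4,\phi_5)$ as a multiple of the skew-symmetric Borcherds product $\Phi_{19}$, and then invoke Theorem \ref{th:j}. First I would record that the total weight bookkeeping works: $\mathrm{wt}(J) = 1+2+3+4+5+4 = 19 = \mathrm{wt}(\Phi_{19})$, which is the necessary numerical coincidence making the argument possible.

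For algebraic independence I would use the restriction/theta-contraction technique exactly as in the discriminant $-15$ case, rather than computing a $5\times 5$ Jacobian of Hermitian modular forms directly. Concretely, I would exhibit a chain of signature $(n,2)$ sublattices $L \supset L_4 \supset L_3$ obtained by intersecting with hyperplanes $u^\perp$ for suitable $u \in L'$, choosing the $u_i$ so that the successive restrictions (quasi-pullbacks, computed via theta-contraction of the vector-valued input forms) kill the generators one at a time in a triangular fashion. Since $\phi_4,\phi_5$ and $\Phi_{19}$ are Borcherds products their vanishing loci on these subgrassmannians are visible from their divisors; and $\phi_1$ (a weight-one additive lift) will survive to the smallest lattice where modular forms are classical elliptic forms. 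This gives successively that $\{\phi_1\}$, $\{\phi_1,\phi_2\}$, $\{\phi_1,\phi_2,\phi_3\}$, and so on are algebraically independent. Alternatively, one can cite the known ring structure of holomorphic modular forms for this group (from \cite{bw2}) to deduce independence of the holomorphic products $\phi_1\phi_3,\phi_2\phi_3,\phi_3,\phi_4,\phi_5$.

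Next I would argue that $J$ is a constant multiple of $\Phi_{19}$. By Theorem \ref{th:j}(i), $J$ vanishes on $D(1) \setminus H = H(1)$ (for $d_K = -19$ we have $4 \nmid d_K$ so $D(1) = H(1)$). Off the arrangement $H = D(4/19)$ the form $J$ is holomorphic, and away from $H$ its only zeros are on $H(1)$; one checks that it has exactly a simple zero there (for instance by comparing the quasi-pullback of $J$ to $H(1)$, i.e. the theta-contraction of the corresponding input form, or by a weight count once $J/\Phi_{19}$ is known to be holomorphic). Since $\Phi_{19}$ has divisor $11 H(1/19) + H(1)$, both supported in $H \cup H(1)$, the quotient $J/\Phi_{19}$ is a holomorphic modular form of weight $0$, hence constant by Looijenga's $\mathcal{M}_0^! = \mathbb{C}$; thus $J = c\,\Phi_{19}$ with $c \neq 0$. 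In particular the hypotheses of Theorem \ref{th:j}(ii) are met — $J$ has a simple zero on $D(1)\setminus H$ and all remaining zeros and poles lie in $H$ — and we conclude $\mathcal{M}_*^! = \mathbb{C}[\phi_1,\phi_2,\phi_3,\phi_4,\phi_5]$.

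The main obstacle I anticipate is verifying that $J$ is \emph{nonzero} and has only a \emph{simple} zero on $H(1)$ with no unexpected zeros away from $H$; this is precisely where the concrete Fourier-coefficient computation enters in the analogous theorems. The cleanest route is to pin down the quasi-pullback of $J$ to $H(1)$ via theta-contraction of the weight $-1$ input form for $\Phi_{19}$ and check it is the Borcherds lift with the expected principal part (giving both nonvanishing and the simple-zero statement at once), or else to simply confirm $J \not\equiv 0$ by evaluating enough Fourier--Jacobi coefficients and then let the divisor comparison $J/\Phi_{19} \in \mathcal{M}_0^!$ do the rest. I would present the argument in the former, computation-light style, matching the discriminant $-15$ exposition.
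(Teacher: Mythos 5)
Your proposal follows the paper's proof essentially verbatim: algebraic independence via a chain of restrictions to subgrassmannians computed by theta-contraction (with the Borcherds products' vanishing read off their divisors), then the identification $J = c\,\Phi_{19}$ from the weight count, the vanishing on $H(1)$, and $\mathcal{M}_0^! = \mathbb{C}$, then Theorem \ref{th:j}(ii). Two execution details differ harmlessly from the paper: the divisor of $\Phi_{19}$ is $15H(1/19)+H(4/19)+H(1)$ rather than $11H(1/19)+H(1)$ (irrelevant, since every component other than $H(1)$ lies in $H = D(4/19)$), and $\phi_1$ does not survive the final restriction but instead acquires a pole there, so the paper tracks the holomorphic combination $\phi_1^2\phi_4$ through the chain in its place.
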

\begin{proof} Using a similar argument to the discriminant $(-15)$ modular group, we consider the restrictions to subgrassmannians with respect to the following chain of sublattices: $$L \rightarrow (\mathbb{Z}v_1 + \mathbb{Z} v_2 + \mathbb{Z}v_3 + \mathbb{Z}v_4 + \mathbb{Z}v_5 ) \rightarrow (\mathbb{Z}v_1 + \mathbb{Z}v_2 + \mathbb{Z}v_3 + \mathbb{Z} v_4) \rightarrow (\mathbb{Z} v_1 + \mathbb{Z}v_2 + \mathbb{Z}v_3)$$ where $$v_1 = (1,0,0,0,0,0), \; v_2 = (0,2,0,-1,-3,0), \; v_3 = (0,0,0,0,0,1), \; v_4=(0, 1, 1, -1, -3, 0), \; v_5 = (0,1,1,0,2,0),$$ i.e. the sublattices $u_1^{\perp}$, $u_1^{\perp} \cap u_2^{\perp}$, $u_1^{\perp} \cap u_2^{\perp} \cap u_3^{\perp}$ with primitive vectors $$u_1 = (0,-1,5/19,9/19,1,0), \; u_2 = (0,0,8/19,3/19,1,0), \; u_3 = (0,0,10/19,-1/19,0,0) \in L'.$$ The lattice $\mathbb{Z}v_1 + \mathbb{Z}v_2 + \mathbb{Z}v_3$ has Gram matrix $\begin{psmallmatrix} 0 & 0 & 1 \\ 0 & -2 & 0 \\ 1 & 0 & 0 \end{psmallmatrix}$ and its modular forms are elliptic modular forms of level one of twice the starting weight. At the final stage in this restriction process the form $\phi_1$ gets a pole, so we instead consider the (holomorphic) cusp form $\phi_1^2 \phi_4$ of weight $6$. We obtain the following images under the restriction maps (again denoted $P$):
\begin{align*}
&\phi_2& &\mapsto& &P\phi_2& &\mapsto& &P^2 \phi_2& &\mapsto& &1 + 240q + 2160q^2 + ... \\
&\phi_3& &\mapsto& &0& &\mapsto& &0& &\mapsto& &0 \\
&\phi_4& &\mapsto& &P\phi_4& &\mapsto& &P^2 \phi_4& &\mapsto& &0 \\
&\phi_5& &\mapsto& &P\phi_5& &\mapsto& &0& &\mapsto& &0 \\
&\phi_1^2 \phi_4& &\mapsto& &P(\phi_1^2 \phi_4)& &\mapsto& &P^2(\phi_1^2 \phi_4)& &\mapsto& &q - 24q^2 \pm ...
\end{align*}
The point at which the products $\phi_3, \phi_4, \phi_5$ vanish in this process can be read immediately off of the principal part of their input forms because $u_1$ has norm $6/19$, $u_2$ has norm $7/19$ and $u_3$ has norm $5/19$. The level one forms $E_4(\tau)$ and $\Delta(\tau)$ in the rightmost column are algebraically independent, so this is also true for $\{\phi_2, \phi_1^2 \phi_4\}$. By considering the point at which zeros appear in this process one finds successively that the sets $\{\phi_2, \phi_1^2 \phi_4, \phi_4\}$, $\{\phi_2, \phi_1^2 \phi_4, \phi_4, \phi_5\}$, $\{\phi_2, \phi_1^2 \phi_4, \phi_4, \phi_5, \phi_3\}$ are algebraically independent.

The Jacobian of $\phi_1,\phi_2,\phi_3,\phi_4,\phi_5$ is therefore nonzero and has weight $19$. By the same argument used previously it must equal $\Phi_{19}$ and the forms generate $\mathcal{M}_*^!$.
\end{proof}

\subsection{Discriminant $-20$}

The lattice $L$ in this case has Gram matrix $\begin{psmallmatrix} 0 & 0 & 0 & 0 & 0 & 1 \\ 0 & 0 & 0 & 0 & 1 & 0 \\ 0 & 0 & 2 & 0 & 0 & 0 \\ 0 & 0 & 0 & 10 & 0 & 0 \\ 0 & 1 & 0 & 0 & 0 & 0 \\ 1 & 0 & 0 & 0 & 0 & 0 \end{psmallmatrix}$.

We will use the nearly-holomorphic modular forms of weight $(-1)$ \begin{align*} F_{-1}(\tau) &= q^{-1/2} e_{(0,0,1/2,1/2,0,0)} + q^{-1/5} (e_{(0,0,0,1/5,0,0)} + e_{(0,0,0,4/5,0,0)}) \\ &- 4q^{-1/20} (e_{(0,0,0,1/10,0,0)} + e_{(0,0,0,9/10,0,0)} + e_{(0,0,1/2,2/5,0,0)} + e_{(0,0,1/2,3/5,0,0)}) \\ &+ 6e_{(0,0,0,0,0,0)} + ... \end{align*}
\begin{align*} G_{-1}(\tau) &= q^{-3/10} (e_{(0,0,1/2,1/10,0,0)} + e_{(0,0,1/2,9/10,0,0)}) + q^{1/5}(e_{(0,0,0,1/5,0,0)} + e_{(0,0,0,4/5,0,0)}) \\ &+ 4q^{-1/20} (e_{(0,0,0,1/10,0,0)} + e_{(0,0,0,9/10,0,0)} + e_{(0,0,1/2,2/5,0,0)} + e_{(0,0,1/2,3/5,0,0)}) \\ &+ 10e_{(0,0,0,0,0,0)} + ... \end{align*}
\begin{align*} H_{-1}(\tau) &= q^{-1} e_{(0,0,0,0,0,0)} + q^{1/5}(e_{(0,0,0,1/5,0,0)} + e_{(0,0,0,4/5,0,0)}) \\ &+ 16q^{-1/20} (e_{(0,0,0,1/10,0,0)} + e_{(0,0,0,9/10,0,0)} + e_{(0,0,1/2,2/5,0,0)} + e_{(0,0,1/2,3/5,0,0)}) \\ &+ 36e_{(0,0,0,0,0,0)} + ... \end{align*}
which lift to Borcherds products $\phi_3, \phi_5, \Phi_{18}$ with trivial character. $\phi_3$ is meromorphic with triple poles and $\phi_5, \Phi_{18}$ are cusp forms. In the notation of Appendix B of \cite{bw3} these are the products $$\phi_3 = \frac{\psi_8^{(3)}}{\psi_5}, \; \phi_5 = \psi_5, \; \Phi_{18} = \psi_3 \psi_{15}.$$
We also need the singular theta lifts $\phi_1, \phi_2, \psi_3$ of weights $1, 2, 3$ of the nearly-holomorphic modular forms
\begin{align*}
F_0(\tau) &= q^{-1/5} (e_{(0,0,0,1/5,0,0)} - e_{(0,0,0,4/5,0,0)})\\
&- 2q^{-1/20} (e_{(0,0,0,1/10,0,0)} + e_{(0,0,1/2,3/5,0,0)} - e_{(0,0,0,9/10,0,0)} - e_{(0,0,1/2,2/5,0,0)}) + ...
\end{align*}
$$F_1(\tau) = q^{-1/20} (e_{(0,0,0,1/10,0,0)} + e_{(0,0,0,9/10,0,0)} + e_{(0,0,1/2,2/5,0,0)} + e_{(0,0,1/2,3/5,0,0)}) - 2 e_{(0,0,0,0,0,0)} + ...$$
$$F_2(\tau) = q^{-1/20} (e_{(0,0,1/2,2/5,0,0)} - e_{(0,0,1/2,3/5,0,0)}) + ...$$

The ring $\mathcal{M}_*^!$ will consist of meromorphic modular forms that are holomorphic away from $$D(1/5) = H(1/5) + H(1/20, (0,0,0,1/10,0,0)) + H(1/20,(0,0,1/2,2/5,0,0)).$$

\begin{theorem} $$\mathcal{M}_*^! = \mathbb{C}[\phi_1, \phi_2, \phi_3, \psi_3, \phi_5].$$
\end{theorem}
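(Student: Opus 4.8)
The plan is to follow the same two-part template used in all the previous cases of this paper: first establish that the five forms $\phi_1,\phi_2,\phi_3,\psi_3,\phi_5$ are algebraically independent, and then compute their Jacobian and identify it (up to a nonzero scalar) with the skew-symmetric Borcherds product $\Phi_{18}$, so that Theorem~\ref{th:j} applies. The weight bookkeeping already works out: the Jacobian of forms of weights $1,2,3,3,5$ has weight $1+2+3+3+5+4 = 18$, matching $\mathrm{wt}(\Phi_{18})$, and it carries the determinant character.

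For the algebraic independence I would mimic the restriction-to-subgrassmannian argument used for discriminants $-15$ and $-19$. That is, I would choose a chain of signature $(n,2)$ sublattices $L \supset L_5 \supset L_4 \supset L_3$, cut out as successive orthogonal complements $u_1^\perp$, $u_1^\perp \cap u_2^\perp$, $u_1^\perp \cap u_2^\perp \cap u_3^\perp$ of suitable primitive vectors $u_i \in L'$, chosen so that the bottom lattice $L_3$ has a simple Gram matrix whose modular forms are elliptic modular forms (here presumably level $\Gamma_0(2)$, since $d_K = -20$ involves a factor of $2$ and of $5$). The norms of the $u_i$ should be arranged so that, under the iterated quasi-pullback $P$, the meromorphic Borcherds products and additive lifts vanish at predictable stages: one wants $P$ to kill $\phi_3$ immediately (as its divisor contains the relevant hyperplane), then a second application to kill $\phi_5$ or $\psi_3$, and so on, while $\phi_1$ (or rather a holomorphic power like $\phi_1^2\phi_5$, since $\phi_1$ itself acquires a pole under restriction) and $\phi_2$ survive all the way down to give algebraically independent elliptic forms such as a weight-$4$ Eisenstein series and a cusp form. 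Tracking the order in which zeros appear then yields, one generator at a time, the algebraic independence of the full set $\{\phi_1,\phi_2,\phi_3,\psi_3,\phi_5\}$.

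Once algebraic independence is known, the Jacobian $J = J(\phi_1,\phi_2,\phi_3,\psi_3,\phi_5)$ is nonzero by \cite[Theorem 2.5 (2)]{W}. By Theorem~\ref{th:j}(i) it vanishes on $D(1)\setminus H$, and since it carries the determinant character it is skew-symmetric, hence a holomorphic multiple of $\Phi_{18}$ — provided $J$ is actually holomorphic away from $H$, which follows because each generator is. Comparing weights forces $J = c\,\Phi_{18}$ for a constant $c$; that $c \neq 0$ is exactly the algebraic independence. Since $\Phi_{18} = \psi_3\psi_{15}$ has only a simple zero on $D(1) = H(1)$ (its divisor is $\mathrm{div}\,\Phi_{18}$, supported on $H(1)$ together with divisors inside $H$), assumption (ii) of Theorem~\ref{th:j} is satisfied, and we conclude $\mathcal{M}_*^! = \mathbb{C}[\phi_1,\phi_2,\phi_3,\psi_3,\phi_5]$. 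In particular $\mathrm{Proj}\,\mathcal{M}_*^!$ is the weighted projective space $\mathbb{P}(1,2,3,3,5)$.

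The main obstacle, as in the earlier sections, is the algebraic independence verification: one must exhibit an explicit chain of sublattices and vectors $u_i$ for which the quasi-pullbacks behave as needed, and compute the theta-contractions of all five input forms far enough to see the claimed vanishing orders and the nonvanishing of the surviving elliptic forms. This is a finite but delicate computation (the paper notes one needs several Fourier--Jacobi coefficients in the analogous direct-Jacobian approach), and the subtlety is that $\phi_1$ has weight one and picks up a pole under restriction, so one has to work instead with a holomorphic auxiliary form like $\phi_1^2\phi_5$ and disentangle the contribution of $\phi_1$ at the end. Everything else — the weight count, the identification with $\Phi_{18}$, and the final appeal to Theorem~\ref{th:j} — is routine given the machinery already set up.
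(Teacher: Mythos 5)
Your overall template is the right one and matches the paper's: establish algebraic independence of the five forms, observe that their Jacobian has weight $1+2+3+3+5+4=18$, identify it with $\Phi_{18}$ up to a nonzero scalar, and invoke Theorem \ref{th:j}(ii). However, the one step that carries all the content --- the algebraic independence --- is left as an unexecuted plan, and the tactic you propose for it is not the one the paper uses. You suggest an iterated chain of restrictions $L \supset u_1^{\perp} \supset u_1^{\perp}\cap u_2^{\perp} \supset \cdots$ down to elliptic modular forms, as in the discriminant $-15$ and $-19$ sections, but you do not exhibit the vectors $u_i$, the vanishing orders, or the surviving elliptic forms; you even hedge on what the bottom of the chain should be. The paper instead performs a \emph{single} restriction, to $\lambda^{\perp} = 2U \oplus A_1(5)$ with $\lambda = (0,0,1/2,0,0,0)$, whose modular forms are paramodular forms of level $5$ with known generators from \cite{bw1}: under this pullback $\psi_3 \mapsto 0$ while $\phi_1, \phi_2, \phi_3, \phi_5$ map to $g_6/b_5$, $g_7/b_5$, $b_4^2/b_5$, $b_5$, and the independence of these four is checked by computing one degree-four paramodular Jacobian, $J(g_6/b_5, g_7/b_5, b_8/b_5, b_5) = b_5^{-3} J(g_6,g_7,b_8,b_5) \ne 0$. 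This one-step restriction also sidesteps the complication you flag about $\phi_1$ acquiring a pole: rather than replacing $\phi_1$ by a holomorphic auxiliary like $\phi_1^2\phi_5$, the paper simply works with the meromorphic pullback $g_6/b_5$ and clears denominators inside the Jacobian. Your chain-of-sublattices route may well be workable, but as written it is a conjecture that suitable $u_i$ exist with the required vanishing pattern, not a proof.

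One further small error: you write that $\Phi_{18}$ has a simple zero on $D(1) = H(1)$. Since $4 \mid |d_K| = 20$, here $D(1) = H(1) + H(1/4)$ (as the paper notes in the proof of Theorem \ref{th:j}(i)), and the Jacobian must be shown to vanish to first order on both components outside $H = D(1/5)$. This does hold --- the $q^{-1}e_0$ term of $H_{-1}(\tau)$ produces a simple zero of $\Phi_{18}$ along every norm-one divisor $\mathcal{D}_{\lambda}$, including those coming from imprimitive $\lambda$ lying over primitive vectors of norm $1/4$ --- but the identification $D(1) = H(1)$ is wrong for this discriminant and should be corrected.
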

\begin{proof} Let $\lambda := (0,0,1/2,0,0,0) \in L'$. The lattice $\lambda^{\perp}$ is the root lattice $2U \oplus A_1(5)$ and the modular forms on its orthogonal group are paramodular forms of level $5$, for which generators were determined in \cite{bw1}. Among them are (up to scalar) uniquely determined cuspidal Gritsenko lifts $g_6$, $g_7$ of weights $6$ and $7$ and holomorphic Borcherds products $b_5, b_8$ of weights $5$ and $8$. We find that the images of the generators under this pullback map are
$$\phi_1 \mapsto g_6 / b_5, \; \phi_2 \mapsto g_7/b_5, \; \phi_3 \mapsto b_4^2 / b_5, \; \psi_3 \mapsto 0, \; \phi_5 \mapsto b_5.$$
In particular, to show that $\{\phi_1, \phi_2, \phi_3, \psi_3, \phi_5\}$ is algebraically independent it is sufficient to prove that $\{g_6/b_5, g_7/b_5, b_8/b_5, b_5\}$ is algebraically independent. By direct computation we find that the Jacobian $$J(g_6 / b_5, g_7/b_5, b_8 / b_5, b_5) = \frac{1}{b_5^3} J(g_6, g_7, b_8, b_5)$$ is not identically zero. By the argument we have used in the previous sections, the forms $\phi_1, \phi_2, \phi_3, \psi_3, \phi_5$ have Jacobian $\Phi_{18}$ and freely generate the ring $\mathcal{M}_*^!$.
\end{proof}

\subsection{Discriminant $-24$}

In this section $L$ is the lattice with Gram matrix $\begin{psmallmatrix} 0 & 0 & 0 & 0 & 0 & 1 \\ 0 & 0 & 0 & 0 & 1 & 0 \\ 0 & 0 & 2 & 0 & 0 & 0 \\ 0 & 0 & 0 & 12 & 0 & 0 \\ 0 & 1 & 0 & 0 & 0 & 0 \\ 1 & 0 & 0 & 0 & 0 & 0 \end{psmallmatrix}$. We need the nearly-holomorphic modular forms of weight $(-1)$ whose Fourier expansions begin \begin{align*} A_{-1}(\tau) &= q^{-1/4} e_{(0,0,1/2,0,0,0)} + q^{-1/6} (e_{(0,0,0,1/6,0,0)} + e_{(0,0,0,5/6,0,0)}) \\ &+ 2q^{-1/24} (e_{(0,0,0,1/12,0,0)} + e_{(0,0,0,5/12,0,0)} + e_{(0,0,0,7/12,0,0)} + e_{(0,0,0,11/12,0,0)}) \\ &+ 4 e_{(0,0,0,0,0,0)} \pm ... \end{align*} \begin{align*} B_{-1}(\tau) &= q^{-3/8} (e_{(0,0,0,1/4,0,0)} + e_{(0,0,0,3/4,0,0)}) - 2q^{-1/6}(e_{(0,0,0,1/6,0,0)} + e_{(0,0,0,5/6,0,0)}) \\ &- q^{-1/24} (e_{(0,0,0,1/12,0,0)} + e_{(0,0,0,5/12,0,0)} + e_{(0,0,0,7/12,0,0)} + e_{(0,0,0,11/12,0,0)}) \\ &+ 4 e_{(0,0,0,0,0,0)} \pm ... \end{align*}
\begin{align*} C_{-1}(\tau) &= q^{-1/2} e_{(0,0,0,1/2,0,0)} + q^{-1/6} (e_{(0,0,0,1/6,0,0)} + e_{(0,0,0,5/6,0,0)}) \\ &- 4q^{-1/24}  (e_{(0,0,0,1/12,0,0)} + e_{(0,0,0,5/12,0,0)} + e_{(0,0,0,7/12,0,0)} + e_{(0,0,0,11/12,0,0)}) \\ &+ 6e_{(0,0,0,0,0,0)} + ... \end{align*}
\begin{align*} D_{-1}(\tau) &= q^{-7/24} (e_{(0,0,1/2,1/12,0,0)} + e_{(0,0,1/2,11/12,0,0)}) + q^{-1/6}(e_{(0,0,0,1/6,0,0)} + e_{(0,0,0,5/6,0,0)}) \\ &+ 6q^{-1/24}(e_{(0,0,0,1/12,0,0)} + e_{(0,0,0,11/12,0,0)}) - 4q^{-1/24} (e_{(0,0,0,5/12,0,0)} + e_{(0,0,0,7/12,0,0)}) \\ &+ 6e_{(0,0,0,0,0,0)} + ... \end{align*}
\begin{align*} J_{-1}(\tau) &= q^{-1} e_{(0,0,0,0,0,0)} + 2q^{-1/6} (e_{(0,0,0,1/6,0,0)} + e_{(0,0,0,5/6,0,0)}) \\ &+ 16q^{-1/24}  (e_{(0,0,0,1/12,0,0)} + e_{(0,0,0,5/12,0,0)} + e_{(0,0,0,7/12,0,0)} + e_{(0,0,0,11/12,0,0)}) \\ &+ 34e_{(0,0,0,0,0,0)} + ... \end{align*}

These lift to Borcherds products labelled $\phi_2, \psi_2, \phi_3, \psi_3, \Phi_{17}$ respectively. The product $\phi_2$ is holomorphic but has a quadratic character under $\mathrm{SU}_{2,2}(\mathbb{Z}[\sqrt{-6}])$; the products $\psi_2, \phi_3, \psi_3$ are meromorphic with trivial character; and $\Phi_{17}$ is a skew-symmetric cusp form. In the notation of Appendix B of \cite{bw3} $$\phi_2 = \psi_2, \; \psi_2 = \frac{\psi_6^{(2)}}{\psi_2^2}, \; \phi_3 = \frac{\psi_5^{(3)}}{\psi_2}, \; \psi_3 = \frac{\psi_5^{(2)}}{\psi_2}.$$ (The form $\Phi_{17}$ does not appear in those tables as its weight is too high.) We will also need a singular additive lift of weight one whose input form is \begin{align*} F_0(\tau) &= q^{-1/6} (e_{(0,0,0,1/6,0,0)} - e_{(0,0,0,5/6,0,0)}) \\ &- 2q^{-1/24} (e_{(0,0,0,1/12,0,0)} - e_{(0,0,0,5/12,0,0)} + e_{(0,0,0,7/12,0,0)}  - e_{(0,0,0,11/12,0,0)}) \\ &\pm ...\end{align*}

The ring $\mathcal{M}_*^!$ will consist of meromorphic modular forms that are holomorphic away from $$D(1/6) = H(1/6) + H(1/24, (0,0,0,1/12,0,0)) + H(1/24, (0,0,0,5/12,1,0)).$$

\begin{theorem} $$\mathcal{M}_*^! = \mathbb{C}[\phi_1, \psi_2, \phi_3, \psi_3, \phi_2^2].$$
\end{theorem}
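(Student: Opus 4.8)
The proof will follow the same template established in the previous four subsections, so the plan is to verify the two hypotheses of Theorem \ref{th:j} for the candidate generators $\phi_1, \psi_2, \phi_3, \psi_3, \phi_2^2$: first that they are algebraically independent (equivalently, that their Jacobian is nonzero), and second that this Jacobian $J$ has only a simple zero on $D(1) \setminus H$ and no other zeros or poles outside the arrangement $H = D(1/6)$. Note that $\phi_2$ has a quadratic character, so $\phi_2^2$ is the natural object to include; the weights of the five forms are $1, 2, 3, 3, 4$, matching Table \ref{Maintab1}, and the Jacobian has weight $1+2+3+3+4+4 = 17$.

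For the algebraic independence I would imitate the chain-of-restrictions argument used for discriminants $-15$, $-19$ and $-20$ rather than compute a five-by-five Jacobian directly. That is, I would exhibit a flag of signature $(n,2)$ sublattices $L \supset L_4 \supset L_3$, obtained as successive orthogonal complements of primitive vectors $u_1, u_2, u_3 \in L'$ chosen so that $L_3$ has rank three with a simple Gram matrix (so its modular forms are elliptic modular forms of a small level), track the images of the five generators (or of suitable holomorphic products such as $\phi_1^2\psi_3$ or $\phi_1 \phi_2^2$, to clear the pole of $\phi_1$ at the last stage) under the iterated theta-contraction $P$, and read off from the principal parts of the input forms — whose orders are governed by the norms of $u_1, u_2, u_3$ — precisely at which stage each form first vanishes. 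If $\psi_3 \mapsto 0$ immediately (clear from its divisor), $\phi_3$ vanishes one step later, $\psi_2$ or $\phi_2^2$ vanishes later still, and the surviving pair restricts to two algebraically independent elliptic modular forms (e.g. $E_4$ and $\Delta$, or the two generators of $M_*(\Gamma_0(2))$), then the nested-vanishing argument forces the full set to be algebraically independent; alternatively one can pull back to a paramodular sublattice $\lambda^\perp$ of level $6$ where the relevant ring of paramodular forms is known and compute a four-by-four Jacobian there.

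Once algebraic independence is established, the Jacobian $J = J(\phi_1, \psi_2, \phi_3, \psi_3, \phi_2^2)$ is a nonzero meromorphic modular form of weight $17$ with the determinant character, holomorphic away from $H = D(1/6)$. By Theorem \ref{th:j}(i) it vanishes on $D(1) \setminus H$; here $24 \equiv 0 \pmod 4$ so $D(1) = H(1) + H(1/4)$. Since $\Phi_{17}$ is a skew-symmetric cusp form of weight $17$ with divisor $\operatorname{div}\Phi_{17}$ supported on $H(1)$, $H(1/4)$ and the components of $D(1/6)$ (one reads this off the principal part of $J_{-1}$, namely $q^{-1}e_0 + 2q^{-1/6}(\cdots) + 16 q^{-1/24}(\cdots) + 34 e_0 + \dots$), the quotient $J/\Phi_{17}$ is a holomorphic modular form on $\mathcal{D}(L) \setminus H$ of weight $0$, hence lies in $\mathcal{M}_0^! = \mathbb{C}$ by Looijenga's result. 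Therefore $J$ is a nonzero scalar multiple of $\Phi_{17}$, which has exactly a simple zero on $D(1)\setminus H$ and all remaining zeros inside $H$, so hypothesis (ii) of Theorem \ref{th:j} is satisfied and $\mathcal{M}_*^! = \mathbb{C}[\phi_1, \psi_2, \phi_3, \psi_3, \phi_2^2]$.

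The main obstacle I expect is the same as in the other multiple-pole cases: choosing the flag of sublattices (the vectors $u_1, u_2, u_3$) and the auxiliary holomorphic products so that the iterated theta-contractions are actually computable and so that the nested-vanishing pattern is sharp enough to conclude algebraic independence — in particular verifying that the final pair of restricted forms is genuinely algebraically independent over the rank-three lattice, and double-checking that the divisor of $\Phi_{17}$ (computed from the Borcherds product formula applied to the principal part of $J_{-1}$) contains no components outside $D(1) \cup D(1/6)$, since any stray component would break the identification $J/\Phi_{17} \in \mathcal{M}_0^!$. Everything else is bookkeeping with weights and characters that parallels the preceding subsections.
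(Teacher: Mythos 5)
Your overall strategy is exactly the paper's: establish algebraic independence of the five candidate generators, observe that their Jacobian $J$ has weight $1+2+3+3+4+4=17$ and the determinant character, vanishes on $D(1)\setminus H = (H(1)+H(1/4))\setminus H$, and conclude $J/\Phi_{17}\in\mathcal{M}_0^!=\mathbb{C}$ so that Theorem \ref{th:j}(ii) applies. The second half of your argument, including the reading of $\operatorname{div}\Phi_{17}$ off the principal part of $J_{-1}$, matches the paper. The one step where you diverge is the algebraic-independence verification: your primary plan is the iterated theta-contraction through a flag of sublattices (the method of the discriminant $-15$ and $-19$ cases), whereas the paper instead follows the discriminant $-20$ template and performs a \emph{single} pullback to $\lambda^{\perp}$ for $\lambda=(0,0,1/2,1/12,0,0)$, a vector of norm $7/24$, so that $\lambda^{\perp}\cong 2U\oplus A_1(7)$ and the restrictions become paramodular forms of level $7$ --- not level $6$ as you guessed in your alternative route. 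There the images are $\phi_1\mapsto g_5/b_4$, $\psi_2\mapsto b_6/b_4$, $\phi_3\mapsto b_7/b_4$, $\psi_3\mapsto 0$, $\phi_2^2\mapsto b_4$, and independence reduces to the nonvanishing of $J(g_5,b_6,b_7,b_4)$, checked on the fourth Fourier--Jacobi coefficient; the vanishing of $\psi_3$ alone under this pullback plays the role that your nested-vanishing pattern would. Both routes are viable, but the single paramodular pullback buys a shorter computation because the level $7$ generators are already tabulated in the paramodular literature, at the cost of having to locate the right $\lambda$; your chain-of-restrictions would require finding a full flag $u_1,u_2,u_3$ with a sufficiently sharp vanishing pattern, which you correctly flag as the main obstacle but do not carry out.
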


\begin{proof} If $\lambda := (0,0,1/2,1/12,0,0) \in L'$, then $\lambda^{\perp}$ is the root lattice $2U \oplus A_1(7)$ and its modular forms are paramodular forms of level $7$. Generators of such paramodular forms were determined in \cite{bw1}; among them are a unique (up to scalar) Gritsenko lift $g_5$ in weight $5$ and Borcherds products $b_4, b_6, b_7$ of weights $4,6,7$. Under the pullback to $\lambda^{\perp}$ the generators map as follows:
$$\phi_1 \mapsto g_5 / b_4,\; \psi_2 \mapsto b_6 / b_4, \; \phi_3 \mapsto b_7 / b_4, \; \psi_3 \mapsto 0, \; \phi_2^2 \mapsto b_4.$$ Therefore the algebraic independence of $\phi_1, \psi_2, \phi_3, \psi_3, \phi_2^2$ follows from the nonvanishing of the Jacobian $$J_0 = J(g_5 / b_4, b_6 / b_4, b_7 / b_4, b_4) = \frac{1}{b_4^3} J(g_5, b_6, b_7, b_4).$$ By direct computation we find that the fourth Fourier-Jacobi coefficient of $J_0$ is nonzero. As in the previous sections, we find that $\phi_1, \psi_2, \phi_3, \psi_3, \phi_2^2$ have Jacobian $J = \Phi_{17}$ and therefore freely generate the ring $\mathcal{M}_*^!$.
\end{proof}

\bigskip

\noindent
\textbf{Acknowledgements} 
H. Wang is grateful to Max Planck Institute for Mathematics in Bonn for its hospitality and financial support. 

\bibliographystyle{plainnat}
\bibliofont
\bibliography{hermitian_poles}

\begin{thebibliography}{20}
\providecommand{\natexlab}[1]{#1}
\providecommand{\url}[1]{\texttt{#1}}
\expandafter\ifx\csname urlstyle\endcsname\relax
  \providecommand{\doi}[1]{doi: #1}\else
  \providecommand{\doi}{doi: \begingroup \urlstyle{rm}\Url}\fi

\bibitem[Borcherds(1998)]{B}
Richard Borcherds.
\newblock Automorphic forms with singularities on {G}rassmannians.
\newblock \emph{Invent. Math.}, 132\penalty0 (3):\penalty0 491--562, 1998.

\bibitem[Borcherds(1999)]{B2}
Richard Borcherds.
\newblock The {G}ross-{K}ohnen-{Z}agier theorem in higher dimensions.
\newblock \emph{Duke Math. J.}, 97\penalty0 (2):\penalty0 219--233, 1999.

\bibitem[Bruinier(2014)]{Br}
Jan~Hendrik Bruinier.
\newblock On the converse theorem for {B}orcherds products.
\newblock \emph{J. Algebra}, 397:\penalty0 315--342, 2014.

\bibitem[Bruinier and Bundschuh(2003)]{BB}
Jan~Hendrik Bruinier and Michael Bundschuh.
\newblock On {B}orcherds products associated with lattices of prime
  discriminant.
\newblock \emph{Ramanujan J.}, 7\penalty0 (1-3):\penalty0 49--61, 2003.
\newblock Rankin memorial issues.

\bibitem[Dern(2001)]{Dern}
Tobias Dern.
\newblock Hermitesche {M}odulformen zweiten {G}rades.
\newblock Dissertation (advisers A. Krieg, N.-P. Skoruppa), RWTH Aachen, 2001.
\newblock URL
  \url{http://publications.rwth-aachen.de/record/95377/files/Dern_Tobias.pdf}.

\bibitem[Dern and Krieg(2003)]{DK1}
Tobias Dern and Aloys Krieg.
\newblock Graded rings of {H}ermitian modular forms of degree 2.
\newblock \emph{Manuscripta Math.}, 110\penalty0 (2):\penalty0 251--272, 2003.

\bibitem[Dern and Krieg(2004)]{DK2}
Tobias Dern and Aloys Krieg.
\newblock The graded ring of {H}ermitian modular forms of degree 2 over {${\Bbb
  Q}(\sqrt{-2})$}.
\newblock \emph{J. Number Theory}, 107\penalty0 (2):\penalty0 241--265, 2004.

\bibitem[Eichler and Zagier(1985)]{EZ}
Martin Eichler and Don Zagier.
\newblock \emph{The theory of {J}acobi forms}, volume~55 of \emph{Progress in
  Mathematics}.
\newblock Birkh\"auser Boston, Inc., Boston, MA, 1985.

\bibitem[Freitag(1967)]{F}
Eberhard Freitag.
\newblock Modulformen zweiten {G}rades zum rationalen und {G}au\ss schen
  {Z}ahlk\"{o}rper.
\newblock \emph{S.-B. Heidelberger Akad. Wiss. Math.-Natur. Kl.},
  1967:\penalty0 3--49, 1967.

\bibitem[Hauffe-Waschb\"{u}sch and Krieg(2021)]{HK}
Adrian Hauffe-Waschb\"{u}sch and Aloys Krieg.
\newblock Congruence subgroups and orthogonal groups.
\newblock \emph{Linear Algebra Appl.}, 618:\penalty0 22--36, 2021.

\bibitem[Krieg(1991)]{K}
Aloys Krieg.
\newblock The {M}aa\ss spaces on the {H}ermitian half-space of degree {$2$}.
\newblock \emph{Math. Ann.}, 289\penalty0 (4):\penalty0 663--681, 1991.

\bibitem[Krieg et~al.(2019)Krieg, Rodriguez, and Wernz]{KRW}
Aloys Krieg, Joana Rodriguez, and Annalena Wernz.
\newblock The maximal discrete extension of {$SL_{2}(\mathcal O_K)$} for an
  imaginary quadratic number field {$K$}.
\newblock \emph{Arch. Math. (Basel)}, 113\penalty0 (1):\penalty0 37--41, 2019.

\bibitem[Looijenga(2003)]{L}
Eduard Looijenga.
\newblock Compactifications defined by arrangements. {II}. {L}ocally symmetric
  varieties of type {IV}.
\newblock \emph{Duke Math. J.}, 119\penalty0 (3):\penalty0 527--588, 2003.

\bibitem[Ma(2019)]{M}
Shouhei Ma.
\newblock Quasi-pullback of {B}orcherds products.
\newblock \emph{Bull. Lond. Math. Soc.}, 51\penalty0 (6):\penalty0 1061--1078,
  2019.

\bibitem[Wang(2019)]{W1}
Haowu Wang.
\newblock The classification of 2-reflective modular forms.
\newblock Preprint, 2019.
\newblock URL \url{arXiv: 1906.10459}.

\bibitem[Wang(2020)]{W}
Haowu Wang.
\newblock The classification of free algebras of orthogonal modular forms.
\newblock To appear in Compos. Math., 2020.
\newblock URL \url{arXiv:2006.02291}.

\bibitem[Williams(2018)]{bw3}
Brandon Williams.
\newblock Computing modular forms for the {W}eil representation.
\newblock Dissertation (adviser R. Borcherds), UC Berkeley, 2018.

\bibitem[Williams(2020{\natexlab{a}})]{bw1}
Brandon Williams.
\newblock Graded rings of paramodular forms of levels 5 and 7.
\newblock \emph{J. Number Theory}, 209:\penalty0 483--515, 2020{\natexlab{a}}.

\bibitem[Williams(2020{\natexlab{b}})]{bw2}
Brandon Williams.
\newblock Two graded rings of {H}ermitian modular forms.
\newblock Preprint, 2020{\natexlab{b}}.
\newblock URL \url{arXiv:2001.04154}.

\bibitem[Zemel(2020)]{Z}
Shaul Zemel.
\newblock Seesaw identities and theta contractions with generalized theta
  functions, and restrictions of theta lifts.
\newblock Preprint, 2020.
\newblock URL \url{arXiv:2009.06012}.

\end{thebibliography}

\end{document}